\setlist[enumerate]{label={\rm(\roman*)}}
\newtheorem{theorem}{Theorem}[section]
\newtheorem{proposition}[theorem]{Proposition}
\newtheorem{lemma}[theorem]{Lemma}
\newtheorem{corollary}[theorem]{Corollary}
\theoremstyle{remark}
\newtheorem{remark}[theorem]{Remark}
\numberwithin{equation}{section}
\let\expandafter\oldproof\csname\string\proof\endcsname
\let\oldendproof\endproof
\renewenvironment{proof}[1][\proofname]{%
  \oldproof[\bf #1]%
}{\oldendproof}
\def\M{\mathcal M}
\def\MM{\mathcal M_+}
\def\N{\mathbb N}
\def\Z{\mathbb Z}
\def\R{\mathbb R}
\def\K{\mathbb K}
\newcommand{\ds}{\mathrm{\,d}s}
\newcommand{\dt}{\mathrm{\,d}t}
\newcommand{\dy}{\mathrm{\,d}y}
\newcommand{\jq}{\frac1q}
\newcommand{\jp}{\frac1p}
\newcommand{\mjp}{{-\frac1p}}
\newcommand{\pq}{{\frac pq}}
\newcommand{\qp}{{\frac qp}}
\newcommand{\mqp}{{-\frac qp}}
\newcommand{\sqa}{\mathbf{a}}
\newcommand{\sqb}{\mathbf{b}}
\newcommand{\sqc}{\mathbf{c}}
\newcommand{\sqv}{\mathbf{v}}
\newcommand{\sqw}{\mathbf{w}}
\newcommand{\squ}{\mathbf{u}}
\newcommand{\udol}{\underaccent{\downarrow}u}
\newcommand{\uhor}{\underaccent{\uparrow}u}
\newcommand{\squdol}{\underaccent{\downarrow}\squ}
\newcommand{\squhor}{\underaccent{\uparrow}\squ}
\newcommand{\uhdol}{\accentset{\downarrow}u}
\newcommand{\uhhor}{\accentset{\uparrow}u}
\newcommand{\squhdol}{\accentset{\downarrow}\squ}
\newcommand{\squhhor}{\accentset{\uparrow}\squ}
\newcommand{\RpZ}{\mathbb{R}_+^\mathbb{Z}}
\newcommand{\sumZ}{\sum_{n\in\Z}}
\newcommand{\eps}{\varepsilon}
\begin{document}

\title{Weighted inequalities for discrete iterated Hardy operators}

\author[Amiran Gogatishvili]{Amiran Gogatishvili\textsuperscript{1}}
\author[Martin K\v repela]{Martin K\v repela\textsuperscript{2}}
\author[Rastislav O{\soft{l}}hava]{Rastislav O{\soft{l}}hava\textsuperscript{3,4}}
\author[Lubo\v s Pick]{Lubo\v s Pick\textsuperscript{5}}

\email[A.~Gogatishvili]{gogatish@math.cas.cz}
\urladdr{0000-0003-3459-0355}
\email[M.~K\v repela]{martin.krepela@math.uni-freiburg.de}
\urladdr{0000-0003-0234-1645}
\email[R.~O\soft{l}hava]{olhava@karlin.mff.cuni.cz}
\urladdr{0000-0002-9930-0454}
\email[L.~Pick]{pick@karlin.mff.cuni.cz}
\urladdr{0000-0002-3584-1454}

\address{\textsuperscript{1}%
 Institute of Mathematics,
 Academy of Sciences of the Czech Republic,
 \v Zitn\'a~25,
 115~67 Praha~1,
 Czech Republic}

\address{\textsuperscript{2}%
	Department of Applied Mathematics,
	University of Freiburg, Ernst-Zermelo-Stra\ss e~1,
	791~04 Freiburg, Germany}

\address{\textsuperscript{3}%
 Department of Mathematical Analysis,
	Faculty of Mathematics and Physics,
	Charles University,
	Sokolovsk\'a~83,
	186~75 Praha~8,
	Czech Republic}

\address{\textsuperscript{4}%
 Institute of Applied Mathematics
 and Information Technologies,
 Faculty of Science,
 Charles University,
 Al\-ber\-tov 6,
 128~43 Praha~2,
 Czech Republic}

\address{\textsuperscript{5}%
	Department of Mathematical Analysis,
	Faculty of Mathematics and Physics,
	Charles University,
	Sokolovsk\'a~83,
	186~75 Praha~8,
	Czech Republic}

\subjclass[2000]{46E30, 26D20, 47B38, 46B70}
\keywords{Weighted discrete inequality; supremum operator; iterated operator}

\thanks{This research was supported by the grants P201-13-14743S and P201-18-00580S of the Czech Science Foundation and by the grant 8X17028 of the Czech Ministry of Education. The research of A. Gogatishvili was partially supported by Shota Rustaveli National Science Foundation (SRNSF), grant no: FR17-589. }

\begin{abstract}
We characterize a~three-weight inequality for an~iterated discrete Hardy-type operator. In the case when the domain space is a~weighted space $\ell^{p}$ with $p\in(0,1]$, we develop characterizations which enable us to reduce the problem to another one with $p=1$. This, in turn, makes it possible to establish an~equivalence of the weighted discrete inequality to an~appropriate inequality for iterated Hardy-type operators acting on measurable functions defined on $\R$, for all cases of involved positive exponents.
\end{abstract}

%\date{\today}

\maketitle

\section{Introduction}

In this paper we focus on a~three-weight inequality for the composition of a~discrete supremal and integral Hardy operator. Let us denote by $\RpZ$ the space of all double-infinite sequences of positive (nonnegative) real numbers. We are interested in the question under what conditions on given $\squ,\sqv,\sqw\in\RpZ$ there exist constants $\Cl{discrete_supremal}, \Cl{d-antigop}\in(0,\infty)$ such that the inequalities
\begin{equation}\label{E:d-gop}
	\Bigg(\sum_{n\in\Z}\Bigg(\sup_{i\ge n}u_i\sum_{k\le i} a_k\Bigg)^{q} w_n\Bigg)^{\frac 1q}
		\le \Cr{discrete_supremal}
	\Bigg(\sum_{n\in\Z}a_n\sp pv_n\Bigg)\sp{\frac 1p}
\end{equation}
and
	\begin{equation}\label{E:d-antigop}
		\Bigg(\sum_{n\in\Z}\Bigg(\sup_{i\ge n} u_i \sum_{k\ge i} a_k\Bigg)^{q} w_n\Bigg)^{\frac 1q}
		\le \Cr{d-antigop}
		\Bigg(\sumZ a_n^p v_n\Bigg)^\jp
	\end{equation}
hold for every sequence $\sqa\in\RpZ$.
We study several aspects of such an~inequality including its relationship to an~analogous one for integral operators.

Before continuing, let us recall that \eqref{E:d-gop} being satisfied for all $\sqa\in\RpZ$ is equivalent to
  \begin{equation}\label{E:dual-trivka}
	\Bigg(\sum_{n\in\Z}\Bigg(\sup_{i\le n}\overline{u}_i\sum_{k\ge i} a_k\Bigg)^{q} \overline{w}_{n}\Bigg)^{\frac 1q}
		\le \Cr{discrete_supremal}
	\Bigg(\sum_{n\in\Z}a_n\sp p \overline{v}_n\Bigg)\sp{\frac 1p},
  \end{equation}
also being satisfied for all $\sqa\in\RpZ$. This is obvious by the index change $\overline{u}_n = u_{-n}$, $\overline{v}_n = v_{-n}$ and $\overline{w}_n = w_{-n}$. Analogously, the inequality
\begin{equation}\label{E:dual-antigop-trivka}
	\Bigg(\sum_{n\in\Z}\Bigg(\sup_{i\le n}\overline{u}_i\sum_{k\le i} a_k\Bigg)^{q} \overline{w}_{n}\Bigg)^{\frac 1q}
		\le \Cr{discrete_supremal}
	\Bigg(\sum_{n\in\Z}a_n\sp p \overline{v}_n\Bigg)\sp{\frac 1p}
  \end{equation}
is equivalent to~\eqref{E:d-antigop}. It is common to refer to \eqref{E:dual-trivka} and~\eqref{E:dual-antigop-trivka} as to the \emph{dual versions} of \eqref{E:d-gop} and~\eqref{E:d-antigop}, respectively. In contrast, inequalities \eqref{E:d-gop} and \eqref{E:d-antigop} (hence also  \eqref{E:dual-trivka} and~\eqref{E:dual-antigop-trivka}) are essentially different.

The success that the theory of weighted inequalities has seen in last three decades can be credited greatly to a~clever combination of classical techniques such as symmetrization or interpolation with new methods such as discretization (the blocking technique), antidiscretization, reduction theorems, and the use of supremum operators.

The research of problems in mathematical physics often leads to the investigation of certain Sobolev-type embeddings. Under certain circumstances, these can be quite successfully attacked by classical symmetrization techniques. After performing this step, one often faces some kind of an~inequality involving operators acting on monotone functions. Handling monotone functions is, however, in general substantially more difficult than working with general nonnegative functions.

There are several possibilities how to continue at this stage. One of the important ones is the use of the so-called reduction theorems, in which the inequality involving monotone functions is equivalently replaced with an~inequality (or inequalities) involving general nonnegative functions.

For certain types of technically difficult inequalities involving monotone functions, stronger tools have to be used. One of such tools that has proved its merit beyond any doubt, is discretization. Discretization techniques replace weighted inequalities involving integrals with those involving sums. The basic advantage of this step is that discrete inequalities can be effectively manipulated with the help of the so-called blocking technique (see the comprehensive treatment in~\cite{GE98}. The drawback is the fact that verification of the discretized conditions on weight functions in practice is virtually impossible. So here we face the danger of replacing one mystery with another one without making much progress. For this reason, a~substantial effort has been spent in order to develop \textit{antidiscretization} techniques (the pivotal paper in this direction is~\cite{GP-discr}). After performing antidiscretization, one gets manageable and easily verifiable conditions for weighted inequalities that could not be obtained otherwise. Let us note that this approach brought a~significant progress to theory of function spaces and the study of properties of operators on function spaces and several long-standing open problems were solved thanks to it. A particular impact could be seen, for instance, to classical Lorentz spaces or to Orlicz spaces (see, for instance,~\cite{ACS17,len15,GKPS,vej17,vej19,CM19} and more).

One of the most important topics intensively studied in the recent theory of weighted inequalities is that of handling \textit{iterated} operators. The reason stems from the wide field of applications, see for example~\cite{GM1,GM2,Kre1,GKPS,ACS17} and the references therein.

One of the basic problems in the theory of weighted inequalities is the comparison of discrete inequalities to their continuous analogues. Consider, for example, a~classical discrete Hardy-type inequality
\begin{equation}\label{E:discrete_hardy}
	\Bigg(\sum_{n\in\Z}\Bigg(\sum_{i\ge n} a_i \Bigg)^q w_n\Bigg)^{\frac 1q}
		\leq \Cr{discrete_hardy}
	\Bigg(\sum_{n\in\Z}a_n^p v_n \Bigg)^{\frac 1p},
\end{equation}
which is supposed to hold for all $\sqa\in\RpZ$ with the same constant $\Cl{discrete_hardy}$, and where $\sqv,\sqw\in\RpZ$ are fixed sequences (weights). Compare this to its ``continuous'' analogue
\begin{equation}\label{E:integral_hardy}
    \Bigg(\int_0\sp{\infty}\Bigg(\int_t^{\infty}f(s)\ds\Bigg)\sp qw(t)\dt\Bigg)\sp{\frac 1q}
    \leq \Cr{integral_hardy}
    \Bigg(\int_0\sp{\infty}f(t)\sp pv(t)\dt\Bigg)\sp{\frac 1p}
\end{equation}
which is to hold with a~constant $\Cl{integral_hardy}$ for all positive measurable functions $f$ on $\R$. In here, the weights $v$, $w$ are fixed positive measurable functions. The relation between the two inequalities is materialized through setting
\begin{equation*}
    v(t)=\sum_{n\in\Z} v_n\chi_{[n,n+1)}(t),\qquad
    w(t)=\sum_{n\in\Z} w_n\chi_{[n,n+1)}(t)
\end{equation*}
for all $t\in\R$.
While~\eqref{E:discrete_hardy} and~\eqref{E:integral_hardy} are rather easily seen to be equivalent for $p\geq 1$, the situation is dramatically different when $p\in(0,1)$. In that case it is not difficult to realize
that~\eqref{E:integral_hardy} cannot hold for any nontrivial weights, because one can always find a~function $f$ for which the right-hand side of~\eqref{E:integral_hardy} is finite but which is at the same time not locally
integrable, hence turning the left hand side to infinity. On the other hand,~\eqref{E:discrete_hardy} can still be satisfied for a~wide variety of nontrivial weight sequences.
One of our principal goals in this paper is to show that, nevertheless, an~appropriate continuous analogue can be found even for $p\in(0,1)$. To achieve this result, we combine a certain scaling argument with a~powerful technique
based on a~somewhat surprising equivalence of several weighted inequalities. We then employ the fact that the case $p=1$ is a meeting point of the separated worlds. It is worth to illustrate this technique in more detail. The point of departure is a~chain of elementary inequalities, namely
\begin{equation}\label{E:elm}
    \sup_{i\ge n}a_i \le \sum_{i\ge n} a_i\le \Bigg(\sum_{i\ge n} a_i^p\Bigg)^\jp.
\end{equation}
This is obviously true for every $p\in(0,1]$, $n\in\Z$ and $\sqa\in\RpZ$. It immediately follows from~\eqref{E:elm} that if $p\in(0,1]$ and the sequences $\sqv,\sqw$ are such that the inequality
  \begin{equation}\label{E:discrete_hardy_p}
    \Bigg( \sum_{n\in\Z} \Bigg( \sum_{i\ge n} a_i^p \Bigg)^{\frac qp} w_n \Bigg)^\jq
    \leq \Cr{discrete_hardy}
    \Bigg( \sum_{n\in\Z}a_n\sp p v_n \Bigg)^\jp
  \end{equation}
holds for every $\sqa\in\RpZ$, then so does \eqref{E:discrete_hardy}. In turn, \eqref{E:discrete_hardy} implies that
  \begin{equation}\label{E:discrete_hardy_sup}
    \Bigg(\sum_{n\in\Z}\Bigg(\sup_{i\ge n}a_i\Bigg)\sp q w_n\Bigg)\sp{\frac 1q}
    \leq \Cr{discrete_hardy}
    \Bigg(\sum_{n\in\Z} a_n\sp pv_n\Bigg)\sp{\frac 1p}
  \end{equation}
holds for all $\sqa\in\RpZ$ as well. The surprising part of the method is that the implication \eqref{E:discrete_hardy_sup}$\Rightarrow$\eqref{E:discrete_hardy_p} holds as well, therefore the three inequalities are in fact
equivalent. It is important to notice that all this is possible only in the case when $p\in(0,1]$, for $p$ bigger than~$1$ the equivalence fails. The technique just described is not entirely new. Similar ideas were used, albeit in a somewhat hidden form,
in the proof of~\cite[Theorem~3.1]{CGMP2}. An~analogous idea works also for continuous-type problems, again for $p\in(0,1]$ only, as shown in~\cite{gogatishvili2007reduction}. The special role of the case $p=1$ (the ``meeting point'' of intervals of parameters in which things are considerably different) can be also seen for instance in~\cite{Si-St96,Sinn94}.

We will present several characterizations of the inequality~\eqref{E:d-gop}, quite different in nature. In the first theorem we state the equivalence of~\eqref{E:d-gop} to an~appropriate integral inequality for functions on $\R$.

We will denote by $\MM$ the collection of all nonnegative measurable functions on $\R$.

\begin{theorem}\label{T:main-prop1}
	Let $p\in[1,\infty)$ and $q\in(0,\infty)$. Let $\squ,\sqv,\sqw\in\RpZ$. Define
		$$
			u=\sum_{n\in\Z} u_n\chi_{[n,n+1)},\qquad
			v=\sum_{n\in\Z} v_n\chi_{[n,n+1)},\qquad
			w=\sum_{n\in\Z} w_n\chi_{[n,n+1)}.
		$$
	Then~\eqref{E:d-gop} holds for every sequence $\sqa\in\RpZ$ if and only if
		\begin{equation}\label{E:spoj-gop}
			\left(\int_\R \left(\sup_{s\ge t}u(s)\int_{-\infty}^sf(y)\dy\right)\sp qw(t)\dt\right)\sp{\frac 1q}
			\leq \Cr{discrete_supremal}
			\left(\int_\R f(t)\sp pv(t)\dt\right)\sp{\frac 1p}
		\end{equation}
	holds for every $f\in\M_+$.
	
	Similarly, \eqref{E:d-antigop} holds for every sequence $\sqa\in\RpZ$ if and only if
		\begin{equation}\label{E:spoj-antigop}
			\left(\int_\R \left(\sup_{s\ge t}u(s)\int_s^{\infty} f(y)\dy\right)^q w(t)\dt\right)^\jq
			\leq \Cr{d-antigop}
			\left(\int_\R f(t)\sp pv(t)\dt\right)\sp{\frac 1p}
		\end{equation}
	holds for every $f\in\M_+$.
\end{theorem}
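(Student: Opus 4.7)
Both equivalences rest on a single transfer principle: to pass from one formulation to the other, use the correspondence $a_n\leftrightarrow\int_n^{n+1}f$ together with step-function test inputs. I treat \eqref{E:d-gop}$\Leftrightarrow$\eqref{E:spoj-gop} first, then indicate the adjustments for the antigop case.

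\textbf{The gop equivalence.} For \eqref{E:spoj-gop}$\Rightarrow$\eqref{E:d-gop}, given $\sqa\in\RpZ$ I test \eqref{E:spoj-gop} on $f=\sum_{n\in\Z} a_n\chi_{[n,n+1)}$. Since $f^p$ and $v$ are constant on each $[n,n+1)$, the right-hand side becomes exactly $\sum_n a_n^p v_n$. On the left, the map $s\mapsto\int_{-\infty}^s f$ is nondecreasing, and its supremum over $[m,m+1)$ (attained as $s\to(m+1)^-$) equals $\sum_{k\le m}a_k$; hence for every $t\in[n,n+1)$ one has $\sup_{s\ge t}u(s)\int_{-\infty}^s f=\sup_{m\ge n}u_m\sum_{k\le m}a_k$, independently of $t$. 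Multiplying by $w(t)=w_n$ and integrating reproduces the left-hand side of \eqref{E:d-gop} exactly. Conversely, for \eqref{E:d-gop}$\Rightarrow$\eqref{E:spoj-gop}, given $f\in\MM$ I set $a_n:=\int_n^{n+1}f$. Since $p\in[1,\infty)$, Jensen's inequality on the unit interval yields $a_n^p\le\int_n^{n+1}f^p$, so $\sum_n a_n^p v_n\le\int_\R f^p v$. For $t\in[n,n+1)$ and $s\in[m,m+1)$ with $m\ge n$, the step-function nature of $u$ and monotonicity of $s\mapsto\int_{-\infty}^s f$ give $u(s)\int_{-\infty}^s f\le u_m\sum_{k\le m}a_k$, so $\sup_{s\ge t}u(s)\int_{-\infty}^s f\le\sup_{m\ge n}u_m\sum_{k\le m}a_k$. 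Invoking \eqref{E:d-gop} then delivers \eqref{E:spoj-gop}.

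\textbf{The antigop equivalence; main obstacle.} The direction \eqref{E:d-antigop}$\Rightarrow$\eqref{E:spoj-antigop} is analogous to its gop counterpart: since $\int_s^\infty f$ is nonincreasing in $s$, its supremum over $[m,m+1)$ equals $\sum_{k\ge m}a_k$ (now attained at the left endpoint $s=m$), so $\sup_{s\ge t}u(s)\int_s^\infty f\le\sup_{m\ge n}u_m\sum_{k\ge m}a_k$ for $t\in[n,n+1)$, and the computation of the right-hand side is identical. The converse \eqref{E:spoj-antigop}$\Rightarrow$\eqref{E:d-antigop} is the delicate point. Testing on $f=\sum_n a_n\chi_{[n,n+1)}$, for $t\in[n,n+1)$ with $t>n$ the point $s=n$ is excluded from the supremum, so the summand $u_n\sum_{k\ge n}a_k$ is not captured directly; only $u_n(a_n(n+1-t)+\sum_{k>n}a_k)$ appears alongside $\sup_{m\ge n+1}u_m\sum_{k\ge m}a_k$. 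I will remedy this by restricting $t$ to $[n,n+\tfrac12]$, on which $a_n(n+1-t)\ge\tfrac12 a_n$ and hence $u_n\bigl(a_n(n+1-t)+\sum_{k>n}a_k\bigr)\ge\tfrac12 u_n\sum_{k\ge n}a_k$; this gives $\sup_{s\ge t}u(s)\int_s^\infty f\ge\tfrac12\sup_{m\ge n}u_m\sum_{k\ge m}a_k$ on that half-interval. Integrating the $q$-th power against $w_n$ over $[n,n+\tfrac12]$ and summing in $n$ recovers the discrete left-hand side of \eqref{E:d-antigop} up to a multiplicative factor $2^{(q+1)/q}$ on the constant, which is enough for the claimed equivalence of inequalities.

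The conceptually easy ingredients are the monotonicity of the Hardy-type integrals in $s$ and the identity $\int_n^{n+1}\phi=\phi_n$ whenever $\phi$ is a step weight. The only nonroutine step is controlling the antigop test-function left-hand side for $t>n$, where the decreasing nature of $\int_s^\infty f$ puts the relevant maximum at the excluded endpoint; this is the step that accounts for the slight constant loss and that I have isolated above.
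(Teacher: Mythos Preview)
Your argument is correct and follows the paper's approach: pass between the discrete and continuous inequalities via the correspondence $a_n=\int_n^{n+1}f$ and step-function test inputs, using H\"older/Jensen on the right-hand side (this is where $p\ge 1$ enters). For the gop case your computation matches the paper's chain \eqref{bla1}--\eqref{bla2} exactly; in particular the identity $\sup_{s\ge t}u(s)\int_{-\infty}^s f=\sup_{m\ge n}u_m\sum_{k\le m}a_k$ for step $f$ and $t\in[n,n+1)$ is precisely what the paper records as \eqref{bla2}.

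For the antigop case the paper simply writes ``can be obtained analogously,'' whereas you spell out the one direction where the analogy is not literal: when $f=\sum_n a_n\chi_{[n,n+1)}$ and $t\in(n,n+1)$, the point $s=n$ is excluded, so the continuous supremum may miss $u_n\sum_{k\ge n}a_k$. Your half-interval restriction $t\in[n,n+\tfrac12]$ is a clean fix and yields the equivalence with the harmless loss $2^{(q+1)/q}$. This is all that is needed in the paper, since Theorem~\ref{T:main-prop1} is only ever invoked up to $\approx$ (cf.\ the proof of Theorem~\ref{T:main-discrete-p-big}); you are simply being more explicit than the paper about a point it elides.
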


In Section 2 below we give the main results concerning characterizations of \eqref{E:d-gop} and \eqref{E:d-antigop}. Section 3 contains some auxiliary results and, above all, the equivalent characterizations for the case $p\in(0,1]$. In the final section we give the remaining proofs of the main results.

\section{Discrete iterated Hardy operators}

This section contains the main results concerning boundedness of iterated Hardy-type operators on weighted sequence spaces.

From now on we are going to use the following notation. Let $\squ\in\RpZ$. For $n\in\Z$ we define
$$
\uhhor_n= \sup_{k\le n} u_k , \quad \uhdol_n = \sup_{k\ge n} u_k.
$$
The sequences $\squhhor$ and $\squhdol$ are called the \emph{increasing} and \emph{decreasing upper envelope of} $\squ$, respectively. Next, define
$$
\uhor_n=\inf_{k\ge n} u_k, \quad \udol_n =\inf_{k\le n} u_k.
$$
The sequences $\squhor$ and $\squdol$ are called the \emph{increasing} and \emph{decreasing lower envelope of} $\squ$, respectively.
If $\sqa,\sqb\in\RpZ$ satisfy $a_n \le b_n$ for all $n\in\Z$, we write
$
\sqa \le \sqb.
$
Furthermore, the notation $A\lesssim B$ means that there exists a~constant $C\in(0,\infty)$ depending only on $p$ and $q$ and such that $A\le CB$. We write $A\approx B$ if $A\lesssim B \lesssim A$.

\begin{theorem} \label{T:main-discrete-p-big}
Let $p,q\in(0,\infty)$ and $\squ,\sqv,\sqw\in\RpZ$. Then the least constant $\Cr{discrete_supremal}$ such that
\eqref{E:d-gop} holds for all $\sqa\in\RpZ$ admits the following estimates.
	\begin{enumerate}
			\item
				If $1 < p\le q$, then
					$$
						\Cr{discrete_supremal} \approx \sup_{n\in\Z} \Bigg( \uhdol_n^q \sum_{i\le n} w_i + \sum_{i\ge n} \uhdol_i^{q} w_i \Bigg)^{\frac 1q}\Bigg(\sum_{k\le n}v_k^{\frac{1}{1-p}}\Bigg)\sp{\frac{p-1}{p}}.
					$$
			\item
				If $p > 1$ and $q<p$, then
					\begin{align*}
						\Cr{discrete_supremal} & \approx \Bigg( \sum_{n\in\Z}  \Bigg(\sum_{i\ge n} \uhdol_i^q w_i\Bigg)^{\frac{q}{p-q}} \uhdol_n^{q} w_n \Bigg(\sum_{k\le n}v_k^{\frac{1}{1-p}}\Bigg)\sp{\frac{(p-1)q}{p-q}} \Bigg)^\frac{p-q}{pq}\\
						& \qquad + \Bigg( \sum_{n\in\Z} \Bigg(\sum_{i\le n} w_i\Bigg)^{\frac{q}{p-q}} w_n \sup_{k\ge n}\uhdol_k^{\frac{pq}{p-q}}\Bigg(\sum_{j\le k} v_j^{\frac{1}{1-p}}\Bigg)^{\frac{(p-1)q}{p-q}} \Bigg)^\frac{p-q}{pq}.
					\end{align*}
			\item
				If $0<p\le 1$ and $p\le q$, then
					$$
						\Cr{discrete_supremal} \approx \sup_{n\in\Z}\Bigg( \uhdol^q_n \sum_{i\le n} w_i + \sum_{k\ge n} \uhdol^q_k w_k \Bigg)^{\frac 1q} \sup_{j\le n} v_j^\mjp.
					$$
			\item
				If $0<q<p\le 1$, then
					$$
						\Cr{discrete_supremal} \approx \Bigg( \sum_{n\in\Z} \Bigg(\sum_{i\ge n } w_i\uhdol_i^q \Bigg)^{\frac{q}{p-q}}	\uhdol_{n}^{q} w_n \sup_{k\le n} v_k^{\frac q{q-p}}
						 + \sum_{n\in\Z} \Bigg(\sum_{i\le n} w_i\Bigg)^{\frac{q}{p-q}} w_n \sup_{k\ge n}\uhdol_{k}^{\frac{pq}{p-q}} v_k^{\frac{q}{q-p}} \Bigg)^{\frac{p-q}{pq}}.
					$$
	\end{enumerate}
\end{theorem}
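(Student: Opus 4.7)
The proof naturally splits according to whether $p\ge 1$ or $p\in(0,1)$, with the endpoint $p=1$ acting as the meeting point for the two approaches.

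\emph{Phase 1 ($p\ge 1$, yielding (i), (ii), and the $p=1$ instances that feed into (iii) and (iv)).} I would first apply Theorem~\ref{T:main-prop1} to transfer~\eqref{E:d-gop} to its continuous counterpart~\eqref{E:spoj-gop} for the step-function weights generated by $\squ,\sqv,\sqw$. The continuous three-weight inequality for the iterated Hardy--supremum operator $f\mapsto\sup_{s\ge t}u(s)\int_{-\infty}^s f$ has known Sawyer--Wirth type characterizations in the literature used in~\cite{GKPS, CGMP2, Kre1}: for $1<p\le q$ a single supremum-type condition, and for $1<p$ with $q<p$ a sum of two integral-type conditions. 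At $p=1$ the factor $\bigl(\int v^{1/(1-p)}\bigr)^{(p-1)/p}$ degenerates into $\esssup v^{-1}$. I would then rewrite each continuous weight integral as a sum of the underlying $v_k$ or $w_k$, each essential supremum over $[t,\infty)$ as $\uhdol_n$, and the outer integrals over $\R$ as sums over $\Z$. The routine-but-crucial verification is that, because the weights are step functions on unit intervals, the essential supremum of $u(s)\Phi(s)$ over $s\ge t\in[n,n+1)$ agrees, up to an absolute constant, with $\sup_{i\ge n}u_i\Phi_i$ for the relevant auxiliary step functions $\Phi$ produced by the conditions.

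\emph{Phase 2 ($p\in(0,1)$, yielding (iii) and (iv)).} Here I rely on the surprising equivalence promised for Section~3: for every $p\in(0,1]$,~\eqref{E:d-gop} is equivalent, with constants depending only on $p$ and $q$, to
\begin{equation*}
  \Bigg(\sum_{n\in\Z}\Bigg(\sup_{i\ge n}u_i\Bigg(\sum_{k\le i}a_k^p\Bigg)^{\jp}\Bigg)^{q}w_n\Bigg)^{\jq}
  \le C\Bigg(\sum_{n\in\Z}a_n^p v_n\Bigg)^{\jp}\qquad\text{for all }\sqa\in\RpZ.
\end{equation*}
Granting this, the substitution $b_n=a_n^p$, combined with pulling the exponent $1/p$ outside the supremum via $u_i(\cdots)^{1/p}=(u_i^p\cdots)^{1/p}$, rewrites the above as
\begin{equation*}
  \Bigg(\sum_{n\in\Z}\Bigg(\sup_{i\ge n}u_i^p\sum_{k\le i}b_k\Bigg)^{\frac qp}w_n\Bigg)^{\jq}
  \le C\Bigg(\sum_{n\in\Z}b_n v_n\Bigg)^{\jp}.
\end{equation*}
Raising both sides to the $p$-th power, this becomes an instance of~\eqref{E:d-gop} with $(\squ,\sqv,\sqw)$ replaced by $(\{u_n^p\}_n,\sqv,\sqw)$ and the exponents $(p,q)$ replaced by $(1,q/p)$. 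I would now invoke Phase~1 for these new parameters (the subcase $q/p\ge 1$ corresponds to (iii), the subcase $q/p<1$ to (iv)); undoing the substitution with the identity $\sup_{k\ge n}u_k^p=\uhdol_n^p$ transforms the $p=1$ conditions obtained in Phase~1 into precisely the expressions claimed in (iii) and (iv), with the correct power on the $v$-factor appearing through $(C_{\text{new}})^{1/p}$.

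The principal obstacle is the nontrivial direction of the Phase~2 equivalence: deducing from~\eqref{E:d-gop}, which features the Hardy sum $\sum_{k\le i}a_k$, the same inequality with the a priori larger quantity $\bigl(\sum_{k\le i}a_k^p\bigr)^{1/p}$ in its place. The reverse implication is immediate from~\eqref{E:elm}, but the forward one is the ``meeting point'' phenomenon valid only for $p\in(0,1]$, and its proof will need a careful blocking argument: testing~\eqref{E:d-gop} on cleverly engineered block sequences supported on finite windows and assembling the resulting family of pointwise bounds with help of the strict subadditivity of $x\mapsto x^p$ for $p\le 1$. A secondary, more routine, difficulty is the careful discrete--continuous bookkeeping in Phase~1 that converts the continuous conditions into the exact discrete envelopes and sums appearing in (i) and (ii).
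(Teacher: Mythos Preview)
Your proposal is correct and follows essentially the same route as the paper: for $p\ge 1$ you transfer to the continuous inequality via Theorem~\ref{T:main-prop1} and invoke the known continuous characterizations (the paper cites \cite[Theorems~4.1 and~4.4]{opic2006weighted} rather than the references you name, but the content is the same), then discretize back; for $p\in(0,1]$ you use the Section~3 equivalence (Theorem~\ref{T:ekv-gop}) to pass to the $p$-power form, substitute $b_n=a_n^p$, and reduce to the case $(1,q/p)$ with weights $(u_n^p,v_n,w_n)$---exactly what the paper packages as Corollary~\ref{T:main-prop2}. Your identification of the nontrivial step (the implication from the $\sup$-form back to the $p$-sum form, proved by blocking) is also on target.
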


\begin{theorem} \label{T:main-antigop}
Let $p,q\in(0,\infty)$ and $\squ,\sqv,\sqw\in\RpZ$. Then the least constant $\Cr{d-antigop}$ such that
\eqref{E:d-antigop} holds for all $\sqa\in\RpZ$ admits the following estimates.
	\begin{enumerate}
			\item
				If $1 < p\le q$, then
					$$
						\Cr{d-antigop} \approx \sup_{n\in\Z} \Bigg( \sum_{i\le n} w_i \sup_{i\le j\le n} u_j^q \Bigg)^{\frac 1q} \Bigg(\sum_{k\ge n}v_k^{\frac{1}{1-p}}\Bigg)^{\frac{p-1}{p}}.
					$$
			\item
				If $p > 1$ and $q<p$, then
					\begin{align*}
						\Cr{d-antigop} & \approx \Bigg( \sum_{n\in\Z}  \Bigg(\sum_{i\ge n} w_i\Bigg)^{\frac{q}{p-q}} w_n^{\frac q{p-q}} \sup_{k\ge n} u^{\frac{pq}{p-q}}_k \Bigg(\sum_{m\le k}v_m^{\frac{1}{1-p}}\Bigg)\sp{\frac{(p-1)q}{p-q}} \Bigg)^\frac{p-q}{pq}\\
						& \qquad + \Bigg( \sum_{n\in\Z} \Bigg(\sum_{i\le n} w_i \sup_{i\le j\le n} u^q_j \Bigg)^{\frac{q}{p-q}} w_n \sup_{k\ge n} u_k^q \Bigg(\sum_{m\ge k} v_m^{\frac{1}{1-p}}\Bigg)^{\frac{(p-1)q}{p-q}} \Bigg)^\frac{p-q}{pq}.
					\end{align*}
			\item
				If $0<p\le 1$ and $p\le q$, then
					$$
						\Cr{d-antigop} \approx \sup_{n\in\Z}\Bigg( u^q_n \sum_{i\le n} w_i + \sum_{k\ge n} u^q_k w_k \Bigg)^{\frac 1q} \sup_{j\le n} v_j^\mjp.
					$$
			\item
				If $0<q<p\le 1$, then
					\begin{align*}
						\Cr{d-antigop} & \approx \Bigg( \sum_{n\in\Z} \Bigg( \sum_{i\le n} w_i \Bigg)^{\frac{q}{p-q}} w_n \sup_{k\ge n} u_k^{\frac q{p-q}} \sup_{m\ge k} v_m^{\frac q{q-p}}  \Bigg)^\frac{p-q}{pq} \\
						& \qquad  + \Bigg( \sum_{n\in\Z} \Bigg(\sum_{i\le n} w_i \sup_{i\le j\le n} u^q_j \Bigg)^{\frac{q}{p-q}} w_n \sup_{k\ge n} u^q_k  \sup_{m\ge k} v_m^{\frac{q}{q-p}} \Bigg)^{\frac{p-q}{pq}} .
					\end{align*}
	\end{enumerate}
\end{theorem}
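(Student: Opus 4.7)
The proof naturally splits according to the two regimes $p>1$ and $p\in(0,1]$, mirroring exactly the pattern of Theorem~\ref{T:main-discrete-p-big}. For $p>1$ (cases (i) and (ii)) the plan is to pass to the continuous side via Theorem~\ref{T:main-prop1} and to invoke known characterizations of $L^p(v)\to L^q(w)$ boundedness for the iterated supremum--Hardy operator on $\R$. For $p\in(0,1]$ (cases (iii) and (iv)) the continuous inequality is vacuous, so the plan is instead to apply the reduction announced for Section~3, built on the elementary chain \eqref{E:elm}, in order to reduce \eqref{E:d-antigop} to an effectively $p=1$ instance, to which the cases (i)--(ii) machinery then applies.

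\textbf{Cases (i) and (ii), $p>1$.} I would define the piecewise-constant extensions $u,v,w$ as in Theorem~\ref{T:main-prop1}, so that \eqref{E:d-antigop} is equivalent to \eqref{E:spoj-antigop} with the same optimal constant. The operator $Tf(t)=\sup_{s\ge t}u(s)\int_s^\infty f(y)\dy$ is of precisely the kind treated in the iterated-Hardy literature cited in the introduction, and its two-weight $L^p(v)\to L^q(w)$ boundedness is characterized by functionals matching the right-hand sides of (i) and (ii). The final step is translation: piecewise constancy lets us replace $\sup_{s\ge t}u(s)$ by $\sup_{k\ge n}u_k$ for $t\in[n,n+1)$, and integrals such as $\int v^{1/(1-p)}$ by sums $\sum v_k^{1/(1-p)}$ up to bounded boundary contributions. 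The double supremum $\sup_{i\le j\le n} u_j^q$ in case (i) then appears naturally as the discrete counterpart of an integral of $[\sup_{s\ge r}u(s)]^q$ over $r\in[i,n+1)$; integrating further in $r$ produces the outer sum $\sum_{i\le n}w_i\sup_{i\le j\le n}u_j^q$.

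\textbf{Cases (iii) and (iv), $0<p\le 1$.} Here the continuous inequality cannot hold for nontrivial weights, so Theorem~\ref{T:main-prop1} is unavailable directly. I would instead invoke the equivalence machinery of Section~3 (which uses \eqref{E:elm}) to show that \eqref{E:d-antigop} is equivalent to an inequality of the same shape but with the $\ell^p(v)$ norm on the right-hand side replaced by an expression effectively of $\ell^1$ or $\ell^\infty$ type, in which $\sqv$ enters through an envelope such as $\sup_{j\le n}v_j^{\mjp}$. After this reduction the problem becomes a $p=1$ instance of \eqref{E:d-antigop}, which can be attacked by the same continuous-analogue route used for (i)--(ii); the resulting conditions are exactly those stated in (iii) and (iv), with the H\"older-dual sums $\sum v_k^{1/(1-p)}$ of (i)--(ii) collapsing to the suprema $\sup_{j\le n} v_j^{\mjp}$ and $\sup_{m\ge k} v_m^{q/(q-p)}$, as is typical when one passes from $p>1$ to $p\le 1$.

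\textbf{Main obstacle.} I expect cases (ii) and (iv) to carry the bulk of the technical load, since their characterizing constants split into two summands. These two summands correspond to two distinct regimes of the outer supremum in $\sup_{i\ge n}u_i\sum_{k\ge i}a_k$: the regime where the supremum is essentially attained at $i=n$, giving a Hardy-type contribution weighted by $\sum_{i\le n}w_i$ and involving the joint quantity $\sup_{i\le j\le n}u_j^q$; and the regime where it is attained for $i$ far to the right, giving a Copson-type contribution weighted by $\sum_{i\ge n}w_i$ and involving $\sup_{k\ge n}u_k^{pq/(p-q)}$. The main bookkeeping challenge, both in the continuous characterization and in the translation to piecewise-constant weights (and, for (iv), also through the Section~3 reduction), is to track precisely which factors of $u$ are pinned down by the supremum and which remain free, so that the two summands of the final estimate emerge in exactly the stated form.
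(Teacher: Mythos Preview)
Your proposal is correct and follows the paper's approach exactly: for $p>1$ one passes to the continuous inequality via Theorem~\ref{T:main-prop1} and invokes the known characterizations (the paper cites \cite[Theorems~6 and~7]{Kre3}), while for $p\le 1$ one uses Corollary~\ref{T:main-prop2} (built on Theorem~\ref{T:ekv-antigop}) to reduce to a continuous inequality with exponent $1$ on the right and then applies the same characterizations. One small clarification on the mechanism: the Section~3 reduction does not alter the $\ell^p(v)$ norm on the right but rather replaces the inner sum $\sum_{k\ge i}a_k$ on the left by $\bigl(\sum_{k\ge i}a_k^p\bigr)^{1/p}$, which after the substitution $a_k^p\mapsto a_k$ becomes a genuine $p=1$ problem; the envelopes of $\sqv$ you mention emerge only in the resulting characterizing conditions, not as an intermediate reformulation of the right-hand side.
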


\section{Equivalence theorems for $p\in(0,1]$}
	
In this section, after presenting some auxiliary results, we show an~equivalence principle for supremal and integral Hardy operators in the case $p\in(0,1]$. These results establish a~link between discrete and continuous Hardy-type inequalities for such $p$, but they are of independent interest.

The first preliminary result is an~extension of \cite[Theorem 3.1]{Sinn03} concerning ``transferring monotonicity'' to the weight sequence on the right-hand side. In here, we use the following notation, for $\sqa\in\RpZ$,
$$
S\sqa_n = \sup_{j\le n} a_j, \qquad I\sqa_j= \sum_{j\le n} a_j.
$$
Hence $S\sqa, I\sqa\in\overline{\R}_+^\Z$ and $S\sqa_n$, $I\sqa_n$ are the $n$-th entries of $S\sqa$ and $I\sqa$, respectively.

\begin{lemma}\label{L:Sinn-trik}
	Let $\squ\in\RpZ$. Let $\varphi:\RpZ\to\R_+$ be a~functional such that there exists a~sequence $\sqc\in\RpZ$ with a~finite number of non-zero entries for which $\varphi(\sqc)>0$. In addition to this, assume that  $\varphi$ satisfies
	\begin{equation}\label{E:SiS}
	S\sqa\le S\sqb \quad \Longrightarrow \quad \varphi(\sqa) \le \varphi(\sqb)
	\end{equation}
	or
	\begin{equation}\label{E:SiI}
	I\sqa\le I\sqb \quad \Longrightarrow \quad \varphi(\sqa) \le \varphi(\sqb)
	\end{equation}
	for all $\sqa,\sqb\in\RpZ$. Then we have
	\begin{equation}\label{E:zmono}
	\sup_{\sqa\in\RpZ} \frac{ \varphi(\sqa) }{ \displaystyle\sumZ a_n u_n } = \sup_{\sqa\in\RpZ} \frac{ \varphi(\sqa) }{ \displaystyle\sumZ a_n \udol_n }.
	\end{equation}
\end{lemma}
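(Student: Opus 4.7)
The direction ``$\ge$'' in \eqref{E:zmono} is immediate from the pointwise bound $\udol_n\le u_n$; the substance lies in the reverse inequality. For this, the plan is a mass-redistribution argument: given any admissible test sequence $\sqa$, build a new sequence $\sqb$ by pushing the mass sitting at each index $n$ leftward to an index where $u$ almost attains its infimum over $\{k\le n\}$. The operation can only increase $\varphi$, by the postulated monotonicity with respect to $S$ or $I$, while simultaneously rewriting the denominator in the desired form $\sumZ a_n\udol_n$.

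Concretely, fix a finitely supported $\sqa\in\RpZ$ and an auxiliary $\eps>0$. For each $n$ in the support of $\sqa$ pick $m_n\le n$ with $u_{m_n}\le\udol_n+\eps$, which is possible by the definition of the infimum, and set $b_k := \sum_{n:\,m_n=k} a_n$ for $k\in\Z$, producing $\sqb\in\RpZ$. One then has
\[
	\sumZ b_k u_k = \sumZ a_n u_{m_n} \le \sumZ a_n\udol_n + \eps\sumZ a_n,
\]
while the inequality $m_n\le n$ yields, for every $j\in\Z$,
\[
	I\sqb_j = \sum_{n:\,m_n\le j} a_n \ge \sum_{n\le j} a_n = I\sqa_j,
\]
and, because each $k\le j$ contributes $a_k$ to $b_{m_k}$ with $m_k\le j$, also $S\sqb_j\ge S\sqa_j$. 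Either hypothesis \eqref{E:SiS} or \eqref{E:SiI} thus gives $\varphi(\sqa)\le\varphi(\sqb)$, whence
\[
	\frac{\varphi(\sqa)}{\sumZ a_n\udol_n + \eps\sumZ a_n}
	\le \frac{\varphi(\sqb)}{\sumZ b_k u_k},
\]
and since $\sqb\in\RpZ$ the right-hand side is dominated by the supremum appearing on the left of \eqref{E:zmono}. Letting $\eps\to 0^+$ and then taking the supremum over finitely supported $\sqa$ establishes the nontrivial direction of \eqref{E:zmono} on that class.

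The main technical obstacle is that the infimum defining $\udol_n$ need not be attained, which is precisely why the $\eps$-perturbation is essential and the conclusion must be extracted by passing to the limit. A secondary point is the passage from finitely supported test sequences to arbitrary $\sqa\in\RpZ$; the hypothesis that some finitely supported $\sqc$ with $\varphi(\sqc)>0$ exists serves both to exclude degenerate cases where the identity is vacuous and, combined with the monotonicity of $\varphi$ under truncation inherited from \eqref{E:SiS} or \eqref{E:SiI}, to ensure that the supremum on the left of \eqref{E:zmono} is already captured by finitely supported sequences up to arbitrarily small error.
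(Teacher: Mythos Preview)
Your opening sentence has the directions reversed: from $\udol_n\le u_n$ one gets $\sum a_n\udol_n\le\sum a_n u_n$, hence the fraction with $\udol$ is \emph{larger}, so it is ``$\le$'' in \eqref{E:zmono} that is immediate, and ``$\ge$'' is the nontrivial one. Your main argument in fact proves ``$\ge$'' (you bound $\varphi(\sqa)/\sum a_n\udol_n$ by the $u$-supremum), so the substance is aimed correctly, but the narration is inconsistent.

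The genuine gap is the passage from finitely supported $\sqa$ to arbitrary $\sqa\in\RpZ$. Your additive choice $u_{m_n}\le\udol_n+\eps$ yields
\[
	\sumZ b_k u_k \le \sumZ a_n\udol_n + \eps\sumZ a_n,
\]
which is useless once $\sum a_n=\infty$, and this can certainly happen while $\sum a_n\udol_n<\infty$ (take $\udol_n\to 0$). Your proposed remedy --- that the supremum is already attained over finitely supported sequences because $\varphi$ is monotone under truncation --- does not work: truncation decreases \emph{both} numerator and denominator, so you have no control on the ratio, and nothing in \eqref{E:SiS} or \eqref{E:SiI} provides the continuity $\varphi(\sqa^{(N)})\to\varphi(\sqa)$ you would need. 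The hypothesis about the finitely supported $\sqc$ is used in the paper for a completely different purpose (see below), not for this reduction.

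The paper's proof avoids the reduction altogether. It first disposes of the degenerate case $\squdol\equiv 0$ by showing directly that the $u$-supremum is infinite: this is exactly where the finitely supported $\sqc$ with $\varphi(\sqc)>0$ is invoked, by pushing all of its mass to a single far-left index where $u$ is tiny. In the nondegenerate case $\liminf_{n\to-\infty}u_n>0$ the paper builds, once and for all, a strictly increasing sequence $\{n_k\}$ (depending only on $\squ$ and $\eps$) with $u_{n_k}\le(1+\eps)\udol_{n_k}$ and $\udol$ constant on each block $[n_k,n_{k+1})$. For an \emph{arbitrary} $\sqa$ one then sets $b_{n_k}=\sup_{n_k\le j<n_{k+1}}a_j$ and obtains the multiplicative bound $\sum b_n u_n\le(1+\eps)\sum a_n\udol_n$, which is exactly what your additive bound fails to deliver. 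Your mass-redistribution idea is the right one and close in spirit to the paper's, and a simple fix is to replace $\udol_n+\eps$ by $(1+\eps)\udol_n$; but then you must treat separately the indices (or the global case) where $\udol_n=0$, as the paper does.
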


\begin{proof}
	The assertion involving an~operator satisfying \eqref{E:SiI} follows from the proof of \cite[Theorem 3.1]{Sinn03}. The proof for the case \eqref{E:SiS} is rather similar but we give it here for the sake of completeness.
	
	The inequality ``$\le$'' is obvious since $\squdol\le \squ$. We have to show ``$\ge$''. First assume that $\squdol$ is identically zero. By the properties of $\varphi$, there exists a~finite set of indices $M\subset \Z$ and a~sequence $\sqc\in\R_+^\Z$ such that $\varphi(\sqc)>0$ and $c_n=0$ unless $n\in M$. Let $\eps>0$. Since $\liminf_{n\to-\infty} u_n = 0$, there exists $N\in\Z$ such that $N\le \min M$ and $u_N<\eps$. Define
	$
	b_N=\max_{n\in M} c_n
	$
	and $b_n= 0$ for all $n\in\Z\setminus \{N\}$. The sequence $\sqb=\{b_n\}_{n\in\Z}$ satisfies $S\sqb\ge S\sqc$, thus also $\varphi(\sqb)\ge \varphi(\sqc)$. Moreover, we have
	$$
	\sum_{n\in\Z} b_nu_n = b_Nu_N < \eps\,\max_{n\in M} c_n.
	$$
	Hence,
	$$
	\frac{ \varphi(\sqb) }{ \sum_{n\in\Z} b_n u_n } >  \frac{ \varphi(\sqb) }{ \eps\,\max_{n\in M} c_n } \ge \frac{ \varphi(\sqc) }{ \eps\,\max_{n\in M} c_n },
	$$
	and therefore
	$$
	\sup_{\sqa\in\R_+^\N} \frac{ \varphi(\sqa) }{ \sum_{n\in\Z} a_n u_n } > \frac{ \varphi(\sqc) }{ \eps\,\max_{n\in M} c_n }.
	$$
	Since $\eps>0$ was arbitrary, we have
	$$
	\sup_{\sqa\in\R_+^\N} \frac{ \varphi(\sqa) }{ \sum_{n\in\Z} a_n u_n } = \infty,
	$$
	so the inequality ``$\ge$'' in \eqref{E:zmono} is obviously satisfied.
	
	In the following, we assume that $\squdol$ is not identically zero, hence
	\begin{equation}\label{E:nenula}
	\lim_{n\to-\infty} \udol_n = \liminf_{n\to-\infty} u_n > 0.
	\end{equation}
	Let $\eps>0$. By definition of the envelope and \eqref{E:nenula}, there exists an~index $n_0\in\Z$ such that
	$$
	u_{n_0} \le (1+\eps) \udol_{n_0}.
	$$
	Now we define a~sequence $\{n_k\}$ recursively.	At first, we construct the ``positive part'' with indices $k>0$ as follows. If $k\in\N$, $n_{k-1}$ is defined and $n_{k-1}<\infty$, define
	$$
	n_k= \inf\left\{j\in\Z\,\big|\, j>n_{k-1},\, u_j \le (1+\eps) \udol_j \right\},
	$$
	where $\inf \emptyset =\infty$. In this way, we get a~strictly increasing sequence of indices $\{n_k\}_{n=0}^{K}$ which is either finite with $K\in\N$ and $n_{K}=\infty$, or infinite with $K=\infty$.
	Furthermore, we construct the ``negative part'' with indices $k<0$. If $k\in\Z$, $k<0$, is such that $n_{k+1}$ is already defined, put
	$$
	n_k= \sup\left\{j\in\Z\,\big|\, j<n_{k+1},\, u_j \le (1+\eps) \udol_j \right\}.
	$$
	In this case, the set over which the supremum is taken is nonempty, by the definition of $\squdol$ and \eqref{E:nenula}. Hence, altogether we obtain a~strictly increasing sequence of indices $\{n_k\}_{n=-\infty}^{K}$ such that
	\begin{equation}\label{E:blizko}
	u_{n_k} \le (1+\eps) \udol_{n_k}
	\end{equation}
	and
	\begin{equation}\label{E:kst}
	\udol_n = \udol_{n_k} \quad \text{for all } n\in\{n_k,\ldots,n_{k+1}-1\}
	\end{equation}
	for all $k\in\Z$ such that $k< K$. To verify \eqref{E:kst}, suppose that if $\udol_{n_k}>\udol_{j}$ for some $j\in\Z$, $j>n_k$. Without loss of generality, $j$ is the smallest index with this property. Then necessarily $\udol_j=u_j$ by definition of the envelope, and thus $n_{k+1}\le j$ by definition of $\{n_k\}$.
	
	Let us note that if $\squdol$ contains no infinite constant subsequence, the above construction may be performed with $\eps=0$  ($K=\infty$ is then guaranteed).
	
	Fix $\sqa\in\RpZ$ arbitrary.
	We define a~sequence $\sqb$ by setting
	\begin{equation}\label{E:defb}
	b_n= \begin{cases}
	\displaystyle\sup_{n_k\le j < n_{k+1}} a_n & \text{ if } n=n_k \text{ for some } k\in\Z,\ k<K,\vspace{2pt}\\
	0 & \text{ else.}
	\end{cases}
	\end{equation}
	It follows that $S\sqa\le S\sqb$. Indeed, for each $n\in\Z$ there exists $k\in\Z$, $k<K$, such that $n_k\le n < n_{k+1}$ and we have, for each $n$,
	$$
	S\sqa_n = \sup_{j\le n} a_j \le \max\left\{ \sup_{j<n_k} a_j,\ \sup_{n_k\le j <n_{k+1}} a_j \right\} = S\sqb_n.
	$$
	Moreover, by \eqref{E:defb}, \eqref{E:kst} and \eqref{E:blizko} one has
	\begin{equation}\label{E:a-vs-b}	
	\sumZ b_n u_n  =  \sum_{k\leq K-1} u_{n_k} \sup_{n_k\le j < n_{k+1}} a_j
	\le (1+\eps)  \sum_{k \leq K-1}  \sup_{n_k\le j < n_{k+1}}  a_j \udol_j
	\le (1+\eps) \sumZ a_n \udol_n.
	\end{equation}
	By the properties of $\varphi$, $S\sqa \le S \sqb$ implies $\varphi(\sqa)\le\varphi(\sqb)$. From this and \eqref{E:a-vs-b} we obtain
	$$
	\frac{ \varphi(\sqa) }{ \displaystyle \sumZ a_n \udol_n }
	\le \frac{ (1+\eps) \varphi(\sqa) }{ \displaystyle \sumZ b_n u_n} \le \frac{ (1+\eps) \varphi(\sqb) }{ \displaystyle \sumZ b_n u_n} \le (1+\eps) \sup_{\sqb\in\RpZ} \frac{  \varphi(\sqb) }{ \displaystyle \sumZ b_n u_n }.
	$$
Since $\sqa\in\RpZ$ and $\eps>0$ were arbitrary, we get the desired inequality
	$$
	\sup \frac{ \varphi(\sqa) }{ \displaystyle \sumZ a_n \udol_n } \le \sup_{\sqb\in\RpZ} \frac{ \varphi(\sqb) }{ \displaystyle \sumZ b_n u_n }.
	$$
\end{proof}

\begin{remark}\label{R:dua1}
	For $\sqa\in\RpZ$, define
	$$
	S^*\sqa_n = \sup_{j\ge n} a_j \quad\text{and} \quad I^*\sqa_j= \sum_{j\ge n} a_j.
	$$
	Lemma \ref{L:Sinn-trik} holds unchanged if we replace $S$ by $S^*$ in \eqref{E:SiS} as well as $I$ by $I^*$ in \eqref{E:SiI}, and $\udol$ by $\uhor$ in \eqref{E:zmono}. To check this, it suffices to perform the index change $\overline{a}_n= a_{-n}$, $n\in\Z$.
\end{remark}

In what follows we are going to use a~blocking technique (see \cite{GE98}). To this end, we need the following definition.
Let $\sqw\in\RpZ$ and $n_0\in\Z$. The \emph{block partition with respect to $\sqw$ starting at $n_0$} is the sequence $\{n_k\}_{k=0}^K$ defined recursively by
\begin{align*}
n_1 & = n_0+1,  \\
n_k & =  \inf \left \{ j\in\Z \ \Big| \ j>n_{k-1},\ \sum_{i\ge j} w_i \ge 2 \sum_{i=n_{k-1}}^{j-1} w_i \right\}\quad\text{for $k\geq 2$}.
\end{align*}
In here, $K\in\Z$ if $\sumZ w_n <\infty$, and $K=\infty$ otherwise. Notice also the convention $\inf \emptyset=\infty$ being used. Furthermore, define
$$
\K = \left\{ k\in\{ 1, \ldots, K-1\} \, \big| \, n_k < n_{k+1}-1 \right\}.
$$
By the construction, for all $k\in\K$ it holds that
$$
\sum_{i=n_k}^{n_{k+1}-2} w_i < 2 \sum_{i=n_{k-1}}^{n_k-1} w_i.
$$
The reverse inequality holds for all $k\in\{ 1, \ldots, K-2\}$. Here, as well as any other parts of the article where block partitions are used, we assume, without loss of generality, that $K\ge 3$.

The blocking technique relies on the following well-known proposition (see~\cite{GE98,Kre3,GP-discr}).

\begin{proposition}\label{P:dya}
	Let $0<\alpha<\infty$. Then there exists a~constant $C\in(0,\infty)$ such that for any $k_{\min},k_{\max}\in\Z\cup\{\pm\infty\}$, $k_{\min}\le k_{\max}-2$, and any $\sqb,\sqc\in\RpZ$ satisfying $b_{k+1}\ge 2 b_k$  for all $k\in\Z,\ k_{\min}\le k \le k_{\max}-2$, one has
	\[
	\sum_{k=k_{\min}}^{k_{\max}} \left( \sum_{m=k}^{k_{\max}} c_m \right)^\alpha b_k  \le C \sum_{k=k_{\min}}^{k_{\max}} c_k^\alpha b_k,
	\]
	\[
	\sum_{k=k_{\min}}^{k_{\max}} \sup_{k\le m \le k_{\max}} \!\! c_m b_k  \le C  \sum_{k=k_{\min}}^{k_{\max}} c_k b_k .
	\]
	The constant $C$ depends only on $\alpha$.
\end{proposition}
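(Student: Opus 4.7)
\textbf{Proof plan for Proposition \ref{P:dya}.} The plan is to first dispose of the supremum inequality by observing that, since $c_m \ge 0$, we have $\sup_{k\le m\le k_{\max}} c_m \le \sum_{m=k}^{k_{\max}} c_m$. Thus the second inequality follows immediately from the first applied with $\alpha=1$, and I only need to prove the first inequality in the two ranges $0<\alpha\le 1$ and $\alpha>1$. The entire argument hinges on a single geometric estimate coming from the hypothesis: iterating $b_{k+1}\ge 2b_k$ gives $b_j \le 2^{j-m} b_m$ for $j\le m$, and summing the geometric series yields
\begin{equation*}
\sum_{j=k_{\min}}^{m} b_j \le 2 b_m \qquad \text{for every } m\in\{k_{\min},\dots,k_{\max}\}.
\end{equation*}

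For $0<\alpha\le 1$, the superadditivity $\bigl(\sum x_m\bigr)^\alpha \le \sum x_m^\alpha$ on $\R_+$ together with Fubini and the above block bound immediately yield
\begin{equation*}
\sum_{k=k_{\min}}^{k_{\max}} \Bigl(\sum_{m=k}^{k_{\max}} c_m\Bigr)^\alpha b_k \le \sum_{k=k_{\min}}^{k_{\max}} \sum_{m=k}^{k_{\max}} c_m^\alpha b_k = \sum_{m=k_{\min}}^{k_{\max}} c_m^\alpha \sum_{k=k_{\min}}^{m} b_k \le 2 \sum_{m=k_{\min}}^{k_{\max}} c_m^\alpha b_m,
\end{equation*}
which is the desired bound with $C=2$.

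For $\alpha>1$, set $S_k = \sum_{m=k}^{k_{\max}} c_m$. The mean value theorem applied to $t\mapsto t^\alpha$ on $[S_{m+1},S_m]$ yields $S_m^\alpha - S_{m+1}^\alpha \le \alpha c_m S_m^{\alpha-1}$, and telescoping produces the pointwise bound $S_k^\alpha \le \alpha \sum_{m=k}^{k_{\max}} c_m S_m^{\alpha-1}$. Inserting this, swapping the order of summation, and applying the block bound once more gives
\begin{equation*}
\sum_{k=k_{\min}}^{k_{\max}} S_k^\alpha b_k \le \alpha \sum_{m=k_{\min}}^{k_{\max}} c_m S_m^{\alpha-1} \sum_{k=k_{\min}}^{m} b_k \le 2\alpha \sum_{m=k_{\min}}^{k_{\max}} c_m S_m^{\alpha-1} b_m.
\end{equation*}
Writing the right-hand side as $\sum_m (c_m b_m^{1/\alpha})(S_m^{\alpha-1} b_m^{(\alpha-1)/\alpha})$ and applying Hölder's inequality with exponents $\alpha$ and $\alpha/(\alpha-1)$ produces the self-improving estimate
\begin{equation*}
\sum_{k=k_{\min}}^{k_{\max}} S_k^\alpha b_k \le 2\alpha \Bigl(\sum_m c_m^\alpha b_m\Bigr)^{1/\alpha}\Bigl(\sum_m S_m^\alpha b_m\Bigr)^{(\alpha-1)/\alpha},
\end{equation*}
from which the claimed bound with $C=(2\alpha)^\alpha$ follows after dividing through by the last factor.

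The one technical point is that the division in the last step requires the quantity $\sum_k S_k^\alpha b_k$ to be finite. This is circumvented by first establishing the inequality for sequences $\sqc$ with only finitely many non-zero entries (so that all sums are finite and finiteness is automatic), obtaining a constant independent of the support, and then passing to the supremum over truncations by monotone convergence. This is the only mildly delicate point; everything else is elementary Hölder/Fubini manipulation exploiting the geometric growth of $\sqb$.
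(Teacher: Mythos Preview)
The paper does not give its own proof of this proposition; it is quoted as well known with references to \cite{GE98,Kre3,GP-discr}. Your argument is the standard one found in those sources and is correct in substance: the reduction of the supremum inequality to the case $\alpha=1$, the subadditivity argument for $0<\alpha\le 1$, and the telescoping/H\"older bootstrap for $\alpha>1$ (together with the truncation to justify dividing through) are all fine.

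There is one genuine slip. You assert $\sum_{j=k_{\min}}^{m} b_j \le 2 b_m$ for \emph{every} $m\in\{k_{\min},\dots,k_{\max}\}$, but the doubling hypothesis is only assumed for $k\le k_{\max}-2$, so your geometric bound is justified only for $m\le k_{\max}-1$. For $m=k_{\max}$ one gets instead $\sum_{j\le k_{\max}} b_j \le 2b_{k_{\max}-1}+b_{k_{\max}}$, and the stray contribution $c_{k_{\max}}^{\alpha}\,b_{k_{\max}-1}$ cannot in general be absorbed into the right-hand side: taking $b_{k_{\max}}=0$ and $c_k=\delta_{k,k_{\max}}$ already breaks the inequality as literally stated. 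This is exactly the endpoint singled out in the paper's remark following the proposition. When $k_{\max}=\infty$ there is no issue; for finite $k_{\max}$ you should treat the term $m=k_{\max}$ separately (in the paper's applications the $b_k$ come from block partitions and this last term is controlled by construction) rather than subsuming it under the uniform bound $\sum_{j\le m} b_j\le 2b_m$. Apart from this endpoint technicality, your proof is complete.
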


Notice that in the above proposition we have assumed that the index set $\{ k_{\min},\ldots,k_{\max} \}$ contains at least three elements, and the condition $b_{k+1}\ge 2 b_k$ does not need to hold for $k=k_{\max}-1$. As the reader may check very easily, this does not change the validity of the proposition.\\

As the least (optimal) constants are expressed as suprema in the results below, the convention $0^{-\alpha} = \infty$, $\infty^{-\alpha} = 0$ $(\alpha>0)$, $0\cdot\infty = 0$ is in charge.

The first result obtained by the blocking technique involves a~simple Hardy inequality. It may be recovered by examining the characterizations in \cite[Theorem 7.7]{GE98}. Here we present a~direct proof since we are going to use its elements further on.

\begin{lemma}\label{L:prop24}
	Let	$p\in(0,1]$, $q\in(0,\infty)$ and $\sqv,\sqw\in\RpZ$. Define
	\begin{align}
	A_\eqref{10}  & = \sup_{\sqa\in\RpZ} \Bigg( \sumZ w_n \sup_{j\ge n} a^q_j\Bigg)^\jq \Bigg( \sumZ a_n^p v_n \Bigg)^\mjp \label{10},\\
	A_\eqref{11}  & = \sup_{\sqa\in\RpZ} \Bigg( \sumZ w_n \Big[ \sum_{j\ge n} a_j \Big]^q \Bigg)^\jq  \Bigg( \sumZ a_n^p v_n \Bigg)^\mjp \label{11},\\
	A_\eqref{12}  & = \sup_{\sqa\in\RpZ} \Bigg( \sumZ w_n \Big[ \sum_{j\ge n} a_j^p \Big]^\qp  \Bigg)^\jq  \Bigg( \sumZ a_n^p v_n \Bigg)^\mjp \label{12}.
	\end{align}
Then the quantities $A_\eqref{10}$, $A_\eqref{11}$ and $A_\eqref{12}$ are equivalent, and, moreover, the equivalence constants depend only on $p$ and $q$.
\end{lemma}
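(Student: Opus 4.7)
The forward chain $A_\eqref{10}\le A_\eqref{11}\le A_\eqref{12}$ is immediate: raising the pointwise estimates in \eqref{E:elm} to the $q$th power and summing against $w_n$ preserves their order, with the denominators $(\sum_n a_n^p v_n)^{\jp}$ unchanged. So the lemma's content is the reverse estimate $A_\eqref{12}\lesssim A_\eqref{10}$.

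First, I would reduce to a non-decreasing $v$. After the bijective substitution $b_n=a_n^p$ on $\RpZ$, the numerator of $A_\eqref{12}$ becomes $\bigl(\sum_n w_n\bigl[\sum_{j\ge n}b_j\bigr]^{\qp}\bigr)^{\jq}$, which is monotone in the $I^*$-ordering of $\sqb$. Lemma~\ref{L:Sinn-trik} together with Remark~\ref{R:dua1} therefore lets me replace $v$ by its increasing lower envelope $\vhor$, and the same reduction applies to $A_\eqref{10}$ (whose functional is monotone in the $S^*$-ordering). So from now on I may assume $v$ is non-decreasing.

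Next I would apply the blocking technique with respect to $\sqw$ from the paragraph preceding Proposition~\ref{P:dya}. The block partition $\{n_k\}$ and the monotonicity of $n\mapsto\sum_{j\ge n}a_j^p$ give the discretization
\[
\sum_n w_n\Bigl[\sum_{j\ge n}a_j^p\Bigr]^{\qp}\lesssim\sum_k W_k\Bigl[\sum_{m\ge k}s_m\Bigr]^{\qp},\qquad s_m:=\sum_{j\in[n_m,n_{m+1})}a_j^p,
\]
where $W_k$ is the block $w$-mass and the doubling built into the partition will let me invoke Proposition~\ref{P:dya} on sequences indexed by $k$. The analysis then splits into the regimes $q\le p$ and $q>p$: for $q\le p$, sub-additivity of $t\mapsto t^{\qp}$ linearises the iterated tail and Proposition~\ref{P:dya} collapses it to a peak-type sum in $s_k$; for $q>p$ one has to combine Proposition~\ref{P:dya} with a H\"older step to absorb the convexity.

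Finally, I would test $A_\eqref{10}$ against the atomic sequence $\sqc$ with $c_{n_k}=s_k^{\jp}$ and $c_j=0$ otherwise. The non-decreasing property of $v$ gives $\sum_n c_n^p v_n=\sum_k s_k v_{n_k}\le\sum_n a_n^p v_n$, since $v_{n_k}\le v_j$ for $j\in[n_k,n_{k+1})$; and $\sup_{j\ge n}c_j^q$, after summation against $w_n$ and absorption of the block doubling constants, reproduces the peak sum coming from the blocking step. Combining the two estimates yields $A_\eqref{12}\lesssim A_\eqref{10}$. I expect the main obstacle to lie in the third paragraph: the passage from the iterated tail $(\sum_{m\ge k}s_m)^{\qp}$ to a peak-based quantity is not pointwise true, so closing the gap requires genuine use of the block doubling together with Proposition~\ref{P:dya} in a manner sensitive to the relative sizes of $p$ and $q$, and bookkeeping the constants so that they depend only on $p$ and $q$ is where most of the technical work sits.
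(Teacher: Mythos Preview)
Your proposal is essentially the paper's proof: the forward chain via \eqref{E:elm}, the reduction to non-decreasing $\sqv$ via Remark~\ref{R:dua1}, the block partition with respect to $\sqw$, collapse of the iterated tail through Proposition~\ref{P:dya}, and testing $A_\eqref{10}$ against a sequence with one atom per block whose $\ell^p(\sqv)$-norm is controlled by that of $\sqa$ thanks to the monotonicity of $\sqv$.

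The one real divergence is your third paragraph. The case distinction $q\le p$ versus $q>p$ and the H\"older step you anticipate are not needed: Proposition~\ref{P:dya} is stated for \emph{all} exponents $\alpha\in(0,\infty)$, so it applies directly with $\alpha=q/p$ and collapses $\sum_k W_k\bigl[\sum_{m\ge k}s_m\bigr]^{q/p}$ to $\sum_k W_k\, s_k^{q/p}$ in a single stroke, uniformly in $p$ and $q$. The paper does exactly this (after a small index shift so that the block masses inherit the required doubling), with no splitting into regimes. So the ``main obstacle'' you flag is in fact already dissolved by the proposition you cite; the sensitivity to the relative sizes of $p$ and $q$ that you expect simply does not materialise. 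A minor bookkeeping difference is that the paper places its atoms at the indices $n_k-1$ rather than $n_k$, matching the shifted blocks $[n_{k+1}-1,n_{k+2}-2]$ that emerge from its discretisation; either choice works once $\sqv$ is increasing.
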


\begin{proof}
	Since $p\in(0,1]$, the inequalities $A_\eqref{10} \le A_\eqref{11} \le A_\eqref{12}$ follow from~\eqref{E:elm}. We will prove $A_\eqref{12} \le C A_\eqref{10}$ with an~appropriate constant $C$.
	
	By Remark \ref{R:dua1}, we may assume that $\sqv$ is increasing. Let $\sqa\in\RpZ$ be such that $\sumZ a_n^pv_n \in(0,\infty)$. Fix an~arbitrary $n_0\in\Z$. Let $\{n_k\}_{k=0}^K$ be the block partition with respect to $\sqw$ starting at $n_0$.
		We have
	\begin{align*}
	\sum_{n\ge n_0} w_n \Big[ \sum_{j\ge n} a_j^p \Big]^\qp  & = \sum_{k=0}^{K-1} \sum_{n=n_k}^{n_{k+1}-1} w_n \Big[ \sum_{j\ge n} a_j^p \Big]^\qp\\
	& = \sum_{k=0}^{K-2} w_{n_{k+1}-1} \Big[ \sum_{j \ge n_{k+1}-1} a_j^p \Big]^\qp + \sum_{k\in\K} \sum_{n=n_k}^{n_{k+1}-2} w_n \Big[ \sum_{j\ge n} a_j^p \Big]^\qp   \\
	& \lesssim \sum_{k=0}^{K-2} w_{n_{k+1}-1} \Big[ \sum_{j \ge n_{k+1}-1} a_j^p \Big]^\qp + \sum_{k\in\K} \sum_{n=n_{k-1}}^{n_{k}-1} w_n \Big[ \sum_{j\ge n_k-1} a_j^p \Big]^\qp \\
	& \lesssim \sum_{k=0}^{K-2} \sum_{n=n_k}^{n_{k+1}-1} w_n \Big[ \sum_{j \ge n_{k+1}-1} a_j^p \Big]^\qp \\
	& \lesssim \sum_{k=0}^{K-2} \sum_{n=n_k}^{n_{k+1}-1} w_n \Big[ \sum_{j= n_{k+1}-1}^{ n_{k+2}-2} a_j^p \Big]^\qp. % \\
	\end{align*}
	Here we used the properties of the block partition on the third line, and Proposition \ref{P:dya} on the fifth. Now define the sequence $\sqb\in\RpZ$ by
	$$
	b_n= \begin{cases}
	\displaystyle\Big[\sum_{j= n_{k}-1}^{ n_{k+1}-2} a_j^p \Big]^\jp& \text{if } n=n_{k}-1  \text{ for some }k\in\{1,\ldots,K-1\}, \vspace{2pt}\\
	0 & \text{otherwise}.
	\end{cases}
	$$
	Since $\sqv$ is increasing, we have
	$$
	\sumZ b_n^p v_n = \sum_{k=1}^{K-1} \sum_{j= n_{k}-1}^{ n_{k+1}-2} a_j^p v_j \le \sumZ a_n^p v_n.
	$$
	Altogether, we obtain the following chain of relations in which $C\in(0,\infty)$ depends only on $p$ and $q$,
	\begin{align*}
	\Bigg( \sum_{n\ge n_0} w_n \Big[ \sum_{j\ge n} a_j^p \Big]^\qp \Bigg)^\jq \Bigg( \sumZ a^p_n v_n \Bigg)^\mjp
	& \le C \Bigg( \sum_{k=0}^{K-2} \sum_{n=n_k}^{n_{k+1}-1} w_n \Big[ \sum_{j= n_{k+1}-1}^{ n_{k+2}-2} a_j^p \Big]^\qp \Bigg)^\jq \Bigg( \sumZ a^p_n v_n \Bigg)^\mjp \\
	& =  C  \Bigg( \sum_{k=0}^{K-2} \sum_{n=n_k}^{n_{k+1}-1} w_n b_{n_{k+1}-1}^q \Bigg)^\jq \Bigg( \sumZ a^p_n v_n \Bigg)^\mjp \\
	& \le C \Bigg( \sum_{k=0}^{K-2} \sum_{n=n_k}^{n_{k+1}-1} w_n \sup_{j\ge n} b_j^q \Bigg)^\jq \Bigg( \sumZ b_n^p v_n \Bigg)^\mjp \\
	& \le C \sup_{\sqb\in\RpZ} \Bigg( \sumZ w_n \sup_{j\ge n} b_j^q \Bigg)^\jq \Bigg( \sumZ b_n^p v_n \Bigg)^\mjp.
	\end{align*}
	Since $n_0$ was arbitrary, we have
	$$
	\Bigg( \sumZ w_n \Big[ \sum_{j\ge n} a_j^p \Big]^\qp \Bigg)^\jq \Bigg( \sumZ a^p_n v_n \Bigg)^\mjp \le C \sup_{\sqb\in\RpZ} \Bigg( \sumZ w_n \sup_{j\ge n} b_j^q \Bigg)^\jq \Bigg( \sumZ b_n^p v_n \Bigg)^\mjp
	$$
	with the same $C$. If $\sumZ a_n^p v_n = 0$, the inequality holds trivially. If $\sumZ a_n^p v_n = \infty$, both sides of the inequality are either zero (when $\sqw$ is constant zero) or infinite. Hence, we may take the supremum over $\sqa\in\RpZ$ on the left-hand side, which yields $A_\eqref{12}\le C A_\eqref{10}$.
\end{proof}

An analogous statement to the preceding lemma in the case when $q=\infty$ holds, too. It can be easily proved by interchanging the suprema.

\begin{lemma}\label{L:inft}
	Let	$p\in(0,1]$ and $\sqv,\sqw\in\RpZ$. Then
	$$
	\sup_{\sqa\in\RpZ} \sup_{n\in\Z} u_n \sum_{j\ge n} a_j  \Bigg( \sum_{i\in \Z} a^p_i v_i \Bigg)^\mjp
	= \sup_{\sqa\in\RpZ} \sup_{n\in\Z} u_n \sup_{j\ge n} a_j  \Bigg( \sum_{i\in \Z} a^p_i v_i \Bigg)^\mjp
	= \sup_{n\in\Z} u_n \sup_{j\ge n} v_j^\mjp.
	$$
\end{lemma}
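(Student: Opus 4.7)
The plan is to establish the chain of inequalities
\[
\sup_{n\in\Z} u_n \sup_{j\ge n} v_j^\mjp \;\le\; \sup_{\sqa} \sup_{n} u_n \sup_{j\ge n} a_j  \Big(\sumZ a^p_i v_i\Big)^\mjp \;\le\; \sup_{\sqa} \sup_{n} u_n \sum_{j\ge n} a_j  \Big(\sumZ a^p_i v_i\Big)^\mjp \;\le\; \sup_{n\in\Z} u_n \sup_{j\ge n} v_j^\mjp.
\]
The middle inequality is trivial since $\sup_{j\ge n} a_j \le \sum_{j\ge n} a_j$.

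For the lower bound (the first inequality), I would test with elementary sequences: fix $n_0\in\Z$ and $j_0\ge n_0$, and take $\sqa$ with $a_{j_0}=1$ and $a_j=0$ otherwise. Then the middle supremum is at least $u_{n_0}\cdot 1 \cdot v_{j_0}^\mjp$. Taking the supremum over all such pairs $(n_0,j_0)$ yields the required estimate (with the stated conventions on $0$ and $\infty$ handling any degenerate weights).

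For the upper bound (the last inequality), the key trick is to interchange the two suprema and then exploit $p\le 1$. Swapping yields
\[
\sup_{n\in\Z} u_n \,\sup_{\sqa\in\RpZ} \frac{\sum_{j\ge n} a_j}{\big(\sumZ a^p_i v_i\big)^\jp}.
\]
For each fixed $n$, I would use the elementary inequality $\sum_{j\ge n} a_j \le \big(\sum_{j\ge n} a_j^p\big)^\jp$ from \eqref{E:elm}, together with the trivial bound $\sumZ a^p_i v_i \ge \sum_{i\ge n} a^p_i v_i$, to reduce the inner supremum to
\[
\sup_{\sqa} \frac{\big(\sum_{j\ge n} a_j^p\big)^\jp}{\big(\sum_{i\ge n} a^p_i v_i\big)^\jp}.
\]
Substituting $b_j=a_j^p$ turns this into $\big(\sup_{\sqb}\sum_{j\ge n} b_j / \sum_{i\ge n} b_iv_i\big)^\jp$, and the inner ratio is classically equal to $\sup_{j\ge n} v_j^{-1}$ (achieved by concentrating $\sqb$ at the index realizing the supremum). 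This gives $\sup_{j\ge n} v_j^\mjp$ for each $n$, so reinserting the outer supremum produces the desired bound.

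No step presents a genuine obstacle; the only subtlety is handling possible zero or infinite values of $v_j$ and the potential nonattainment of $\sup_{j\ge n} v_j^{-1}$, which is dispatched by an $\varepsilon$-argument (choose $j^*\ge n$ with $v_{j^*}^{-1}$ within $\varepsilon$ of the supremum and test with $\sqb=\chi_{\{j^*\}}$), all in agreement with the conventions $0^{-\alpha}=\infty$ and $0\cdot\infty=0$ fixed in the paper.
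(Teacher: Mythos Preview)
Your proposal is correct and matches the paper's own treatment: the paper does not write out a proof but merely remarks that the lemma ``can be easily proved by interchanging the suprema,'' which is exactly the mechanism you use for the upper bound, complemented by the obvious test with single-entry sequences for the lower bound.
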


\begin{remark}\label{R:dua2}
	As usual, both Lemmas \ref{L:prop24} and \ref{L:inft} have their ``dual versions'', in which the suprema or sums over $j\ge n$ are replaced by their respective counterparts over $j\le n$. We omit the details.
\end{remark}

We are now in a position to prove a~similar equivalence for the more complicated iterated Hardy operators.

\begin{theorem}\label{T:ekv-antigop}
	Let	$p\in(0,1]$, $q\in(0,\infty)$ and $\sqv,\sqw\in\RpZ$. Define
	\begin{align}
	A_\eqref{ag1} &= \sup_{\sqa\in\RpZ} \Bigg( \sumZ w_n \Big[ \sup_{j\ge n}\,u_j   \sup_{i\ge j} a_i \Big]^q \Bigg)^\jq \Bigg( \sumZ v_n a_n^p \Bigg)^\mjp, \label{ag1}\\
	A_\eqref{ag2} &= \sup_{\sqa\in\RpZ} \Bigg( \sumZ w_n \Big[ \sup_{j\ge n} u_j \sum_{i\ge j} a_i \Big]^q \Bigg)^\jq \Bigg( \sumZ v_n a_n^p \Bigg)^\mjp, \label{ag2}\\
	A_\eqref{ag3} &= \sup_{\sqa\in\RpZ} \Bigg( \sumZ w_n \Big[ \sup_{j\ge n} u_j^p \sum_{i\ge j} a_i^p \Big]^\qp \Bigg)^\jq \Bigg( \sumZ v_n a_n^p \Bigg)^\mjp. \label{ag3}
	\end{align}
	Then $A_\eqref{ag1}$, $A_\eqref{ag2}$ and $A_\eqref{ag3}$ are mutually equivalent, and, moreover, the equivalence constants depend only on $p$ and $q$.	
\end{theorem}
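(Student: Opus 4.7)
The overall plan is to establish the chain $A_\eqref{ag1} \le A_\eqref{ag2} \le A_\eqref{ag3} \lesssim A_\eqref{ag1}$; only the last inequality is nontrivial. The first two follow immediately from the elementary chain \eqref{E:elm} applied to the tail $(a_i)_{i\ge j}$: for $p\in(0,1]$ one has $\sup_{i\ge j} a_i \le \sum_{i\ge j} a_i \le (\sum_{i\ge j} a_i^p)^{1/p}$, and these inequalities are preserved by multiplication by $u_j$, by the supremum over $j\ge n$, and by weighted $q$-summation.

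For $A_\eqref{ag3}\lesssim A_\eqref{ag1}$ I would mimic the strategy of Lemma~\ref{L:prop24}. First, I reduce to the case of nondecreasing $\sqv$: substituting $\tilde a_n := a_n^p$, the quantity $A_\eqref{ag3}^p$ rewrites as $\sup_{\tilde a\in\RpZ} \tilde\varphi(\tilde a)/\sumZ v_n \tilde a_n$, where $\tilde\varphi$ depends on $\tilde a$ only through $I^*\tilde a$ and is monotone therein, so Lemma~\ref{L:Sinn-trik} in the form of Remark~\ref{R:dua1} lets me replace $\sqv$ by its increasing lower envelope $\sqvhor$. Next, I fix $\sqa\in\RpZ$ with $0<\sumZ v_n a_n^p<\infty$ and $n_0\in\Z$, and take the block partition $\{n_k\}_{k=0}^K$ of $\sqw$ starting at $n_0$. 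Writing $W_k := \sum_{n=n_k}^{n_{k+1}-1} w_n$, $U_\ell := \max_{n_\ell\le m<n_{\ell+1}} u_m^p$, and $R_m := \sum_{i=n_m}^{n_{m+1}-1} a_i^p$, the crude bounds $\sup_{j\ge n}\le \sup_{j\ge n_k}$ for $n\in[n_k,n_{k+1})$, together with $u_j^p\le U_\ell$ and $\sum_{i\ge j} a_i^p\le \sum_{m\ge \ell} R_m$ for $j\in[n_\ell,n_{\ell+1})$, give
\[
\sum_{n\ge n_0} w_n \Bigl[\sup_{j\ge n} u_j^p \sum_{i\ge j} a_i^p\Bigr]^{q/p} \lesssim \sum_k W_k \Bigl[\sup_{\ell\ge k} U_\ell \sum_{m\ge\ell} R_m\Bigr]^{q/p}.
\]

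The doubling of $W_k$ built into the block partition, combined with Proposition~\ref{P:dya}, then lets me replace the inner $\sum_{m\ge\ell} R_m$ by a constant multiple of $\sup_{m\ge\ell} R_m$, up to index shifts of at most one that doubling absorbs. Finally I would define a test sequence $\sqb\in\RpZ$ concentrated on block endpoints by $b_{n_k-1} := R_k^{1/p}$ and $b_n:=0$ otherwise. Because $\sqv$ is nondecreasing, $\sumZ v_n b_n^p = \sum_k v_{n_k-1} R_k \le \sumZ v_n a_n^p$, and at a point $j^*\in[n_\ell, n_{\ell+1})$ realising $U_\ell^{1/p}$ one has $u_{j^*}\sup_{i\ge j^*} b_i \gtrsim U_\ell^{1/p}\sup_{m\ge\ell} R_m^{1/p}$, so the block-level bound is controlled by the $A_\eqref{ag1}$-numerator evaluated at $\sqb$. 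Letting $n_0\to-\infty$ and then taking the supremum over $\sqa$ finishes the proof. The main technical obstacle will be this last step: honestly aligning the block-level suprema of $U_\ell$ and $R_m$ with the pointwise product $\sup_{j\ge n} u_j \sup_{i\ge j} b_i$ while controlling all index shifts via the doubling of $W_k$, with constants depending only on $p$ and $q$.
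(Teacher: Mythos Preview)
Your reduction to nondecreasing $\sqv$ via Remark~\ref{R:dua1} is fine, and the block partition setup is natural. The gap is in the step you flag yourself as ``the main technical obstacle'': the claim that Proposition~\ref{P:dya} lets you replace $\sum_{m\ge\ell}R_m$ by $\sup_{m\ge\ell}R_m$ inside the nested expression $\sup_{\ell\ge k}U_\ell\sum_{m\ge\ell}R_m$. This is false as a block-level inequality. Take $U_0$ large, $U_\ell=0$ for $\ell\neq 0$, and $R_0=\cdots=R_M=1$, $R_m=0$ otherwise. Then for every $k\le 0$ one has $\sup_{\ell\ge k}U_\ell\sum_{m\ge\ell}R_m = U_0(M+1)$ while $\sup_{\ell\ge k}U_\ell\sup_{m\ge\ell}R_m = U_0$, and the ratio is $M+1$. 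Doubling of $W_k$ does not help, because the discrepancy is not spread across the outer sum in $k$ but sits in the inner expression at each fixed $k$. Proposition~\ref{P:dya} collapses tails only when the doubling weight multiplies the tail directly; here $U_\ell$ intervenes and destroys that structure.

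The same obstruction kills your test sequence. With $b_{n_k-1}=R_k^{1/p}$, if $j^*\in[n_\ell,n_{\ell+1})$ realises $U_\ell^{1/p}$, then $\sup_{i\ge j^*}b_i=\sup_{m\ge\ell+1}R_m^{1/p}$, which misses $R_\ell$ entirely; the ``diagonal'' contribution $U_\ell R_\ell$ is invisible to $\sqb$. Shifting indices by one in $W_k$ goes the wrong way (since $W_{k-1}\le \tfrac12 W_k$) and cannot recover it. The paper's proof meets this difficulty head-on by splitting $\sum_{i\ge j}a_i^p$ into a part local to the block containing $j$ and a far tail. The tail part $B_2$ separates into $(\sup_j u_j^q)\cdot[\text{tail}]^{q/p}$ and is handled by Lemma~\ref{L:prop24}. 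The local part $B_1$ is exactly the diagonal term your construction misses; for it the paper invokes Lemma~\ref{L:inft} on each block to produce a near-optimising sequence $\sqc$ \emph{block by block}, preserving the local $\ell^p(v)$-mass, rather than a single global test sequence. That local optimisation is the missing ingredient in your outline.
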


\begin{proof}
Due to~\eqref{E:elm}, only $A_\eqref{ag3} \le C A_\eqref{ag1}$ needs proving. Let $n_0\in\Z$ and let $\{n_k\}_{k=0}^K$ be the block partition with respect to $\sqw$ starting at $n_0$. Without loss of generality we may assume that $K\ge 3$. Let $\sqa\in\RpZ$ be such that $\sumZ a^p_n v_n \in (0,\infty)$.
	Analogously as in Lemma \ref{L:prop24} we have
	\begin{align*}
	\sum_{n\ge n_0} w_n \sup_{j\ge n}\,u_j^q \Big[ \sum_{i \ge j} a_i^p \Big]^\qp
	& = \sum_{k=0}^{K-1} \sum_{n=n_k}^{n_{k+1}-1} w_n \sup_{j\ge n}\,u_j^q \Big[ \sum_{i \ge j} a_i^p \Big]^\qp\\
	& =  \sum_{k=0}^{K-2} w_{n_{k+1}-1} \sup_{j \ge n_{k+1}-1} u_j^q \Big[ \sum_{i \ge j} a_i^p \Big]^\qp \\
	& \qquad + \sum_{k\in\K} \sum_{n=n_k}^{n_{k+1}-2} w_n \sup_{n\le j \le n_{k+1}-2} u_j^q \Big[ \sum_{i\ge j} a_i^p \Big]^\qp  \\
	& \lesssim \sum_{k=0}^{K-2} \sum_{n=n_k}^{n_{k+1}-1} w_n  \sup_{j \ge n_{k+1}-1} u_j^q \Big[\sum_{i \ge j} a_i^p \Big]^\qp\\
	& \lesssim \sum_{k=0}^{K-2} \sum_{n=n_k}^{n_{k+1}-1} w_n  \sup_{n_{k+1}-1 \le j \le n_{k+2}-2} u_j^q \Big[\sum_{i \ge j} a_i^p \Big]^\qp\\
	& \lesssim \sum_{k=0}^{K-2} \sum_{n=n_k}^{n_{k+1}-1} w_n  \sup_{n_{k+1}-1 \le j \le n_{k+2}-2} u_j^q \Big[\sum_{i = j}^{n_{k+2}-2} a_i^p \Big]^\qp\\
	& \qquad	+ \sum_{k=0}^{K-3} \sum_{n=n_k}^{n_{k+1}-1} w_n \sup_{n_{k+1}-1 \le j \le n_{k+2}-2} u_j^q \Big[ \sum_{i \ge n_{k+2}-1} a_i^p \Big]^\qp	\\
	& = B_1 + B_2.
	\end{align*}
	If $k\in\{0,\ldots,K-2\}$ and
	\begin{equation}\label{E:podm}
	\sum_{ n = n_{k+1}-1}^{n_{k+2}-2} a_n^p v_n > 0,
	\end{equation}
	find $c_{n_{k+1}-1}, \ldots, c_{n_{k+2}-2}\ge 0$ such that
	$$
	\sum_{ n = n_{k+1}-1}^{n_{k+2}-2} c_n^p v_n = \sum_{ n = n_{k+1}-1}^{n_{k+2}-2} a_n^p v_n
	$$
	and
	\begin{multline*}
	\sup_{\sqb\in\RpZ} \sup_{n_{k+1}-1 \le j \le n_{k+2}-2} u_j \sup_{j\le i \le n_{k+2}-2} b_i \Bigg( \sum_{ m = n_{k+1}-1}^{n_{k+2}-2} b_m^p v_m \Bigg)^\mjp \\
	\le 2 \sup_{n_{k+1}-1 \le j \le n_{k+2}-2} u_j \sup_{j\le i \le n_{k+2}-2} c_i \Bigg( \sum_{ n = n_{k+1}-1}^{n_{k+2}-2} c_n^p v_n \Bigg)^\mjp.
	\end{multline*}
	For all other indices $n\in\Z$ such that $n\notin\{{n_{k+1}-1}, \ldots, {n_{k+2}-2}\}$ and all $k\in\{0,\ldots,K-2\}$ satisfying \eqref{E:podm} we define $c_n = 0$. In this way we obtain a~sequence $\sqc\in\RpZ$ which moreover satisfies
	$$
	\sumZ c_n^p v_n \le \sum_{n\ge n_0} a_n^p v_n.
	$$
	Using Lemma \ref{L:inft} we get
	\begin{align*}
	%			& \sum_{k=0}^{K-2} \sum_{n=n_k}^{n_{k+1}-1} w_n \Big[ \sup_{n_{k+1}-1 \le j \le n_{k+2}-2} u_j \sum_{i = j}^{n_{k+2}-2} a_i \Big]^\qp\\
	B_1 & \le \sum_{k=0}^{K-2} \sum_{n=n_k}^{n_{k+1}-1} w_n \Bigg[ \sup_{\sqb\in\RpZ} \sup_{n_{k+1}-1 \le j \le n_{k+2}-2} u_j \sum_{i = j}^{n_{k+2}-2} b_i \Bigg( \sum_{ m = n_{k+1}-1}^{n_{k+2}-2} b_m^p v_m \Bigg)^\mjp \Bigg]^q \Bigg( \sum_{ n = n_{k+1}-1}^{n_{k+2}-2} a_n^p v_n \Bigg)^\qp\\
	& \lesssim \sum_{k=0}^{K-2} \sum_{n=n_k}^{n_{k+1}-1} w_n \Bigg[ \sup_{\sqb\in\RpZ} \sup_{n_{k+1}-1 \le j \le n_{k+2}-2} u_j \sup_{j\le i \le n_{k+2}-2} b_i \Bigg( \sum_{ m = n_{k+1}-1}^{n_{k+2}-2} b_m^p v_m \Bigg)^\mjp \Bigg]^q \Bigg( \sum_{ n = n_{k+1}-1}^{n_{k+2}-2} a_n^p v_n \Bigg)^\qp\\
	& \lesssim \sum_{k=0}^{K-2} \sum_{n=n_k}^{n_{k+1}-1} w_n \Big[ \sup_{n_{k+1}-1 \le j \le n_{k+2}-2} u_j \sup_{j\le i \le n_{k+2}-2} c_i \Big]^q\\
	& \lesssim \sum_{n\ge n_0} w_n \Big[ \sup_{j \ge n} u_j \sup_{i\ge j} c_i \Big]^q.
	\end{align*}
	Hence,
	\begin{align*}
	B_1 \Bigg( \sum_{n\ge n_0} a_n^p v_n \Bigg)^\mqp  & \lesssim \sum_{n\ge n_0} w_n \Big[ \sup_{j \ge n} u_j \sup_{i\ge j} c_i \Big]^q \Bigg( \sumZ a_n^p v_n \Bigg)^\mqp \\
	& \lesssim \sum_{n\ge n_0} w_n \Big[ \sup_{j \ge n} u_j \sup_{i\ge j} c_i \Big]^q \Bigg( \sumZ c_n^p v_n \Bigg)^\mqp \\
	& \le A_\eqref{ag1}^q.
	\end{align*}
	Lemma \ref{L:prop24} further yields
	\begin{align*}
	B_2 \Bigg( \sum_{n\ge n_0} a_n^p v_n \Bigg)^\mqp
	& \lesssim \sum_{k=0}^{K-3} \sum_{n=n_k}^{n_{k+1}-1} w_n \sup_{n_{k+1}-1 \le j \le n_{k+2}-2} u_j^q \Big[ \sup_{i \ge n_{k+2}-1} a_i \Big]^q	\Bigg( \sumZ a_n^p v_n \Bigg)^\mqp \\
	& \le \sumZ  w_n \Big[ \sup_{j\ge n} u_j  \sup_{i \ge j} a_i \Big]^q	\Bigg( \sumZ a_n^p v_n \Bigg)^\mqp \\
	& \le A_\eqref{ag1}^q.
	\end{align*}
	Finally, we get
	\begin{align*}
	& \Bigg( \sum_{n\ge n_0} w_n \Big[ \sup_{j\ge n}\,u_j \sum_{i \le j} a_i \Big]^q \Bigg)^\jq \Bigg( \sumZ a_n^p v_n \Bigg)^\mjp \\
	& \le \Bigg( \sum_{n\ge n_0} w_n \Big[ \sup_{j\ge n}\,u_j \sum_{i \le j} a_i \Big]^q \Bigg)^\jq \Bigg( \sum_{n\ge n_0} a_n^p v_n \Bigg)^\mjp \\
	& \le C A_\eqref{ag1}.
	\end{align*}
	Since $n_0$ may be arbitrarily small, we obtain, with the same constant $C$, the desired inequality $A_\eqref{ag2} \le C A_\eqref{ag1}$. The cases when $\sumZ a^p_n v_n$ is either zero or infinite can be treated as in the end of the proof of Lemma \ref{L:prop24}.
\end{proof}

\begin{theorem}\label{T:ekv-gop}
	Let	$p\in(0,1]$, $q\in(0,\infty)$ and $\sqv,\sqw\in\RpZ$. Define
	\begin{align}
	A_\eqref{g1} &= \sup_{a\in\RpZ} \Bigg( \sumZ w_n \Big[ \sup_{j\ge n}\,u_j   \sup_{i\le j} a_i \Big]^q \Bigg)^\jq \Bigg( \sumZ v_n a_n^p \Bigg)^\mjp, \label{g1}\\
	A_\eqref{g2} &= \sup_{a\in\RpZ} \Bigg( \sumZ w_n \Big[ \sup_{j\ge n} u_j \sum_{i\le j} a_i \Big]^q \Bigg)^\jq \Bigg( \sumZ v_n a_n^p \Bigg)^\mjp, \label{g2}\\
	A_\eqref{g3} &= \sup_{a\in\RpZ} \Bigg( \sumZ w_n \Big[ \sup_{j\ge n} u_j^p \sum_{i\le j} a_i^p \Big]^\qp \Bigg)^\jq \Bigg( \sumZ v_n a_n^p \Bigg)^\mjp. \label{g3}
	\end{align}
	Then $A_\eqref{g1}$, $A_\eqref{g2}$ and $A_\eqref{g3}$  are equivalent, and, moreover, the equivalence constants depend only on $q$.	
\end{theorem}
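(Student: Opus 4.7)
The chain $A_\eqref{g1}\le A_\eqref{g2}\le A_\eqref{g3}$ follows directly from~\eqref{E:elm} applied to the partial sequence $\{a_i\}_{i\le j}$ under the outer $\sup_{j\ge n}u_j[\,\cdot\,]$ and $\ell^q(\sqw)$-norm, exactly as in the opening of the proof of Theorem~\ref{T:ekv-antigop}. The substantive content is therefore the reverse inequality $A_\eqref{g3}\le C A_\eqref{g1}$, and my plan is to mimic the proof of Theorem~\ref{T:ekv-antigop} step by step, using throughout the dual forms of Lemmas~\ref{L:prop24} and~\ref{L:inft} provided by Remark~\ref{R:dua2}, since the inner operation here is $\sum_{i\le j}$ rather than $\sum_{i\ge j}$.

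Fix $n_0\in\Z$, let $\{n_k\}_{k=0}^K$ be the block partition with respect to $\sqw$ starting at $n_0$ (with $K\ge 3$), and let $\sqa\in\RpZ$ satisfy $\sumZ a_n^p v_n\in(0,\infty)$. As in the antigop proof, decompose the truncated outer sum block by block, peel off the boundary term $n=n_{k+1}-1$, and for $n\in[n_k,n_{k+1}-2]$ bound
\[
\sup_{j\ge n} u_j^q\Big[\sum_{i\le j} a_i^p\Big]^{\qp} \le \sup_{n\le j\le n_{k+1}-2} u_j^q\Big[\sum_{i\le j} a_i^p\Big]^{\qp}+\sup_{j\ge n_{k+1}-1} u_j^q\Big[\sum_{i\le j} a_i^p\Big]^{\qp}.
\]
A first application of Proposition~\ref{P:dya} to the block weights then reduces the whole expression to two sums $B_1$ and $B_2$ structurally analogous to the corresponding pieces in the antigop argument; the only novelty is that in the inner quantity one decomposes $\sum_{i\le j}a_i^p=\sum_{i\le n_k-1}a_i^p+\sum_{n_k\le i\le j}a_i^p$ and uses $(X+Y)^{q/p}\lesssim X^{q/p}+Y^{q/p}$ to separate the ``local'' mass (within the active window) from the ``distant'' mass (accumulated from earlier blocks).

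The local mass is turned into a supremum by the dual form of Lemma~\ref{L:inft}: in each block with positive local $\sqv$-mass one picks $c_{n_k},\dots,c_{n_{k+1}-2}\ge 0$ which saturate the sup-versus-sum identity on that window up to a factor of $2$ and satisfy the block-normalisation $\sum_m c_m^p v_m=\sum_m a_m^p v_m$ over the window; extending $\sqc$ by zero gives $\sqc\in\RpZ$ with $\sumZ c_n^p v_n\le\sumZ a_n^p v_n$, so the local contribution is dominated by $\sumZ w_n[\sup_{j\ge n}u_j\sup_{i\le j}c_i]^q\le A_\eqref{g1}^q\,(\sumZ c_n^p v_n)^{q/p}$. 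The distant mass $\sum_{i\le n_k-1}a_i^p$ is absorbed using the doubling property of $\sqw$ together with the dual form of Lemma~\ref{L:prop24}, and the piece $B_2$ is treated by the dual of Lemma~\ref{L:prop24} applied directly on the block scale. Taking $q$-th roots, sending $n_0\to-\infty$, and disposing of the degenerate cases $\sumZ a_n^p v_n\in\{0,\infty\}$ as at the end of the proof of Lemma~\ref{L:prop24} yields $A_\eqref{g3}\le CA_\eqref{g1}$ with $C$ depending only on $q$, the $p$-dependent factors introduced by Proposition~\ref{P:dya} being reabsorbed in the construction of $\sqc$. I expect the chief technical obstacle to be this handling of the distant mass $\sum_{i\le n_k-1}a_i^p$: because the outer sup is forward ($j\ge n$) while the inner sum is backward ($i\le j$), the distant mass cannot be discarded as it effectively was in the antigop argument, and must instead be telescoped back to the earlier blocks via the dyadic estimate of Proposition~\ref{P:dya}.
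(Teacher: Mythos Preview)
Your proposal is correct and follows the same route as the paper: block partition on $\sqw$, reduction of the outer supremum to a finite window via Proposition~\ref{P:dya}, splitting the inner backward sum $\sum_{i\le j}a_i^p$ into a local piece and a distant piece, handling the local piece via the dual of Lemma~\ref{L:inft} with a saturating sequence $\sqc$, and handling the distant piece via the dual of Lemma~\ref{L:prop24}. The paper's own proof is extremely terse: it displays only the resulting two-term upper bound and remarks that the second (distant) term requires the dual version of Lemma~\ref{L:prop24}, so your outline in fact supplies more detail than the original.

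Two small corrections. First, your claim that the final constant depends only on $q$, with the $p$-dependence ``reabsorbed in the construction of $\sqc$'', is not right: Proposition~\ref{P:dya} is applied with exponent $q/p$, and the dual of Lemma~\ref{L:prop24} carries constants depending on both $p$ and $q$; neither is touched by the choice of $\sqc$. The phrase ``depend only on $q$'' in the theorem statement is a slip (compare the statement of Theorem~\ref{T:ekv-antigop}). Second, your closing remark that in the antigop argument the distant mass was ``effectively discarded'' mischaracterises that proof: the distant mass there is precisely the term $B_2$, and it is handled by Lemma~\ref{L:prop24}, not discarded. The gop situation is therefore entirely parallel---one simply uses the dual of Lemma~\ref{L:prop24} because the inner accumulation runs in the opposite direction---and no additional telescoping via Proposition~\ref{P:dya} is needed for that term.
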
	

\begin{proof}	
	The proof is essentially the same as that of Theorem~\ref{T:ekv-antigop}. The only minor difference is that, with $\{n_k\}_{k=0}^K$ being the block partition with respect to $\sqw$ starting at $n_0$ and $\sqa\in\RpZ$ being a~sequence such that $\sumZ a^p_n v_n \in (0,\infty)$, we get the following estimate:
	\begin{align*}
	\sum_{n\ge n_0} w_n \Big[ \sup_{j\ge n}\,u_j \sum_{i \le j} a_i \Big]^q
	& \lesssim \sum_{k=0}^{K-2} \sum_{n=n_k}^{n_{k+1}-1} w_n \Big[ \sup_{n_{k+1}-1 \le j \le n_{k+2}-2} u_j \sum_{i=n_{k+1}-2}^j a_i \Big]^q \\
	& \qquad	+ \sum_{k=1}^{K-2} \sum_{n=n_k}^{n_{k+1}-1} w_n \sup_{n_{k+1}-1 \le j \le n_{k+2}-2} u_j^q \Big[ \sum_{i \le n_{k+1}-2} a_i \Big]^q.	
	\end{align*}
	Both terms can then be treated as in Theorem \ref{T:ekv-antigop}. A slight difference concerns the second one for which we just have to use the ``dual version'' of Lemma \ref{L:prop24} (see Remark \ref{R:dua2}) instead of the standard one.
\end{proof}	

\begin{remark}
	It goes without saying that Theorems \ref{T:ekv-antigop} and \ref{T:ekv-gop} may be restated in a~``dual form'' by replacing each symbol ``$\le$'' in their statements by ``$\ge$'' and vice versa.
\end{remark}

At this point we may apply the obtained results to establish an~interesting characterization of a~discrete inequality by a~continuous one in the case $p\in(0,1]$.

\begin{corollary}\label{T:main-prop2}
	Let $p\in(0,1]$ and $q\in(0,\infty)$. Let $\squ,\sqv,\sqw\in\RpZ$. Define $\squ,\sqv$ and $\sqw$ as in Theorem~\ref{T:main-prop1}. Then~\eqref{E:d-gop} holds for every sequence $\sqa\in\RpZ$ if and only if
	\begin{equation*}
	\left(\int_\R \left(\sup_{s\ge t}u(s)^p\int_{-\infty}^sf(y)\dy\right)^\qp w(t)\dt\right)^\pq
	\le \Cr{discrete_supremal}^p
	\int_\R f(t) v(t)\dt
	\end{equation*}
	holds for every $f\in\MM$.
	
	Similarly, \eqref{E:d-antigop} holds for every sequence $\sqa\in\RpZ$ if and only if
	\begin{equation*}
	\left(\int_\R \left(\sup_{s\ge t}u(s)^p\int_s^{\infty} f(y)\dy\right)^\qp w(t)\dt\right)^\pq
	\le \Cr{d-antigop}^p
	\int_\R f(t) v(t)\dt
	\end{equation*}
	holds for every $f\in\MM$.
\end{corollary}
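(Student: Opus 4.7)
The plan is to deduce Corollary~\ref{T:main-prop2} by chaining two equivalences already proved in the paper: the intra-discrete reduction of Theorem~\ref{T:ekv-gop} (respectively Theorem~\ref{T:ekv-antigop}) with the discrete-to-continuous transfer of Theorem~\ref{T:main-prop1}. The pivotal observation is that although Theorem~\ref{T:main-prop1} on its own requires $p\ge 1$, for $p\in(0,1]$ the supremal form available from Theorem~\ref{T:ekv-gop} can be rewritten, by a simple substitution, as an inequality whose outer domain exponent is exactly $1$, so that Theorem~\ref{T:main-prop1} still applies.

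First I would treat~\eqref{E:d-gop}. By Theorem~\ref{T:ekv-gop} the least constant admitting~\eqref{E:d-gop} for every $\sqa\in\RpZ$ is comparable, with constants depending only on $p$ and $q$, to the least constant $C$ such that
\[
\Bigg( \sumZ w_n \Bigg[ \sup_{j\ge n} u_j^p \sum_{i\le j} a_i^p \Bigg]^\qp \Bigg)^\jq \le C \Bigg( \sumZ v_n a_n^p \Bigg)^\jp
\]
holds for every $\sqa\in\RpZ$. The substitution $b_n=a_n^p$ is a bijection of $\RpZ$ onto itself; applying it and then raising both sides to the power $p$ turns the displayed inequality into
\[
\Bigg( \sumZ w_n \Bigg[ \sup_{j\ge n} u_j^p \sum_{i\le j} b_i \Bigg]^\qp \Bigg)^\pq \le C^p \sumZ v_n b_n,
\]
which has precisely the shape of~\eqref{E:d-gop} with the triple $(p, q, \squ)$ replaced by $(1,\, q/p,\, \{u_n^p\}_{n\in\Z})$ and the constant replaced by $C^p$, while $\sqv$ and $\sqw$ remain unchanged.

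Since the new outer parameter $1$ lies in $[1,\infty)$, Theorem~\ref{T:main-prop1} now applies to this reformulated inequality and delivers the equivalent integral version
\[
\Bigg(\int_\R \Big(\sup_{s\ge t} U(s) \int_{-\infty}^s f(y)\dy \Big)^\qp w(t)\dt\Bigg)^\pq \le C^p \int_\R f(t) v(t)\dt,
\]
where $U = \sum_{n\in\Z} u_n^p \chi_{[n,n+1)}$. Because $u$ is itself a step function on the same partition, $U(s) = u(s)^p$ holds pointwise, so this is exactly the continuous inequality claimed in the corollary. All three links in the chain are equivalences with constants depending only on $p$ and $q$, so the full equivalence follows. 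The argument for~\eqref{E:d-antigop} is identical, with Theorem~\ref{T:ekv-antigop} replacing Theorem~\ref{T:ekv-gop} and the antigop half of Theorem~\ref{T:main-prop1} used at the last step.

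The main obstacle is bookkeeping rather than substance: one must check that the substitution $b_n=a_n^p$ interacts cleanly with the envelope and partial-sum structure, that the identity $U=u^p$ persists pointwise on all of $\R$, and that invoking Theorem~\ref{T:main-prop1} with the possibly-subunit exponent $q/p$ causes no issue (it does not, since the second exponent there is allowed to range over all of $(0,\infty)$). None of these points is deep, but each requires careful tracking of which exponents and weight-sequences are modified at each step of the chain.
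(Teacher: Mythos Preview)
Your proposal is correct and matches the paper's intended argument. The corollary is stated without an explicit proof, immediately after Theorems~\ref{T:ekv-antigop} and~\ref{T:ekv-gop} and prefaced by ``At this point we may apply the obtained results\ldots'', so the route you describe---use Theorem~\ref{T:ekv-gop} (resp.~\ref{T:ekv-antigop}) to pass to the $A_{\eqref{g3}}$ form, substitute $b_n=a_n^p$ to land at a discrete inequality with domain exponent $1$, then invoke Theorem~\ref{T:main-prop1} with parameters $(1,q/p)$ and weight $u^p$---is exactly what the authors have in mind.
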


\section{Proofs}

Let us start by proving Theorem~\ref{T:main-prop1} from the introduction.

\begin{proof}[Proof of Theorem~\ref{T:main-prop1}]
Suppose that~\eqref{E:d-gop} holds and let $f\in\MM$. Set $a_n=\int_n^{n+1} f$ for $n\in\Z$. Then we get, using the H\"older inequality,
	\begin{equation}\label{bla1}
	    \left(\sum_{n\in\Z}a_n^p v_n\right)^\jp
	    \le
	    \left(\sum_{n\in\Z}\int_n^{n+1} f(t)^p v_n\dt \right)\sp{\frac 1p}
	    =
	    \left(\int_{0}\sp{\infty}f(t)^p v(t)\dt\right)^\jp
	\end{equation}
and
\begin{align}
	\left(
    	\sum_{n\in\Z}\left(\sup_{i\ge n}u_i \sum_{k\le i} a_k\right)^q w_n
    \right)\sp{\frac 1q}
    &=\left(
    	\sum_{n\in\Z} \left(\sup_{i\ge n}u_i\int_{-\infty}^{i+1} f(y)\dy\right)^q \int_n^{n+1} w(t)\dt
    \right)\sp{\frac 1q} \notag \\
    &=\left(
    	\sum_{n\in\Z}\int_n^{n+1} \left(\sup_{s\ge t} u(s)\int_{-\infty}^ s f(y)\dy\right)^q w(t)\dt
        \right)\sp{\frac 1q} \notag \\
    &=\left(
	    \int_\R \left(\sup_{s\ge t} u(s)\int_{-\infty}^ s f(y)\dy\right)^q w(t)\dt
	    \right)\sp{\frac 1q}, \label{bla2}
\end{align}
and~\eqref{E:spoj-gop} follows.

Conversely, assume that~\eqref{E:spoj-gop} is satisfied. Let $\sqa\in\RpZ$ be arbitrary. Define
\begin{equation*}
    f=\sumZ a_n\chi_{[n,n+1)}.
\end{equation*}
Then we get \eqref{bla2} as above, and \eqref{bla1} holds now with identity in place of inequality. Hence, \eqref{E:d-gop} follows.

The equivalence between \eqref{E:d-antigop} and \eqref{E:spoj-antigop} can be obtained analogously.
\end{proof}

Now we can complete the proofs of the main results.

\begin{proof}[Proof of Theorem~\ref{T:main-discrete-p-big}]
	Let $\squ,\sqv,\sqw$ be as in Theorem~\ref{T:main-prop1}. Let $\Cr{discrete_supremal}$ be the least constant (including the possibility $\Cr{discrete_supremal}=\infty$) such that \eqref{E:d-gop} holds for all $\sqa\in\RpZ$.
	
	Assume that $1<p\le q$.
	From Theorem~\ref{T:main-prop1} and \cite[Theorem 4.1]{opic2006weighted} it follows that
	\begin{align*}
	 	\Cr{discrete_supremal}  & \approx \sup_{t\in\R} \sup_{x\ge t}u(x) \left(\int_{-\infty}^t w(s)\ds \right)^\jq \left(\int_{-\infty}^t v(s)^{\frac{1}{1-p}}\ds\right)^{\frac{p-1}{p}}\\
		 &	\qquad + \sup_{t\in\R} \left( \int_t^\infty \sup_{y\ge s}u(y)^{q} w(s)\ds\right)^{\frac{1}{q}} \left(\int_{-\infty}^t v(s)^{\frac{1}{1-p}}\ds\right)^{\frac{p-1}{p}} \\
		 & = B_1 + B_2.
	\end{align*}
	Notice that \cite[Theorem 4.1]{opic2006weighted} is stated for inequality \eqref{E:spoj-gop} in which the integration domain is replaced by $(0,\infty)$ and where the function $u$ is continuous. Therefore, to get the result in the form we need, we have to use a~change of variables and a~monotone approximation of $u$ by continuous functions.
	Anyway, we have
		\begin{align*}
			B_1 & = \sup_{n\in\Z} \sup_{t\in[n,n+1)} \sup_{x\ge t}u(x) \left(\int_{-\infty}^t w(s)\ds \right)^\jq \left(\int_{-\infty}^t v(s)^{\frac{1}{1-p}}\ds\right)^{\frac{p-1}{p}}\\
			 & = \sup_{n\in\Z} \uhdol_n \Bigg( \sum_{i\le n} w_i \Bigg)^\jq \Bigg( \sum_{k\le n} v^\frac1{1-p}_k \Bigg)^\frac{p-1}p
		\end{align*}
	and
		\begin{align*}
			B_2 & = \sup_{n\in\Z} \sup_{t\in[n,n+1)} \left( \int_t^\infty \sup_{y\ge s}u(y)^{q} w(s)\ds\right)^{\frac{1}{q}} \left(\int_{-\infty}^t v(s)^{\frac{1}{1-p}}\ds\right)^{\frac{p-1}{p}}\\
			& = \sup_{n\in\Z} \sup_{t\in[n,n+1)} \left( \int_t^{n+1} \sup_{y\ge s}u(y)^{q} w(s)\ds \int_{n+1}^\infty \sup_{y\ge s}u(y)^{q} w(s)\ds\right)^{\frac{1}{q}} \\
			& \hspace{80pt} \times \left( \int_n^t v(s)^{\frac{1}{1-p}}\ds + \int_{-\infty}^n v(s)^{\frac{1}{1-p}}\ds  \right)^{\frac{p-1}{p}}\\
			& = \sup_{n\in\Z} \sup_{\lambda\in[0,1)} \Bigg( \lambda\, \uhdol_n^q w_n + \sum_{i\ge n+1} \uhdol_i^q w_i \Bigg)^{\frac{1}{q}} \Bigg( (1-\lambda) v_n^{\frac{1}{1-p}} + \sum_{k\le n-1} v_k^{\frac{1}{1-p}} \Bigg)^{\frac{p-1}{p}}\\
			& \approx \sup_{n\in\Z} \Bigg( \sum_{i\ge n} \uhdol_i^q w_i \Bigg)^{\frac{1}{q}} \Bigg( \sum_{k\le n} v_k^{\frac{1}{1-p}} \Bigg)^{\frac{p-1}{p}}.
		\end{align*}
	To verify the latter equivalence, observe that
		$$
			\sup_{\lambda\in[0,1]} (X+\lambda x)^\alpha (Y + (1\!-\!\lambda) y)^\beta \ \le \ (X+x)^\alpha(Y+y)^\beta\ \le\ 2^{\alpha+\beta} \sup_{\lambda\in[0,1]} (X+\lambda x)^\alpha (Y + (1\!-\!\lambda) y)^\beta
		$$
	holds for all $x,y,X,Y\in[0,\infty)$ and $\alpha,\beta\in(0,\infty)$. (In case of doubts set $\lambda=\frac12$.) Combining the obtained estimates gives (i).
	
	If $1<p$ and $q>p$, then we use Theorem~\ref{T:main-prop1} and \cite[Theorem 4.4]{opic2006weighted} and proceed similarly as above.
	
	In the remaining cases where $p\le 1$ we use Corollary~\ref{T:main-prop2}, \cite[Theorems 4.1, 4.4]{opic2006weighted} and proceed analogously again.
\end{proof}

\begin{proof}[Proof of Theorem \ref{T:main-antigop}]
  This proof is analogous to that of Theorem~\ref{T:main-discrete-p-big}. We use Theorem~\ref{T:main-prop1}, Corollary~\ref{T:main-prop2} and the characterizations concerning inequalities for positive functions which are found in \cite[Theorems 6 and 7]{Kre3}. Details are omitted.
\end{proof}

\bibliography{bibfile}

\def\cprime{$'$}
\begin{thebibliography}{CGMP08}

\bibitem[ACS17]{ACS17}
Angela Alberico, Andrea Cianchi, and Carlo Sbordone.
\newblock Continuity properties of solutions to the {$p$}-{L}aplace system.
\newblock {\em Adv. Calc. Var.}, 10(1):1--24, 2017.

\bibitem[CGMP08]{CGMP2}
Mar\'{i}a Carro, Amiran Gogatishvili, Joaquim Mart\'{i}n, and Lubo\v{s} Pick.
\newblock Weighted inequalities involving two {H}ardy operators with
  applications to embeddings of function spaces.
\newblock {\em J. Operator Theory}, 59(2):309--332, 2008.

\bibitem[CM19]{CM19}
Paola Cavaliere and Zden\v{e}k Mihula.
\newblock Compactness for {S}obolev-type trace operators.
\newblock {\em Nonlinear Anal.}, 183:42--69, 2019.

\bibitem[GE98]{GE98}
Karl-Goswin Grosse-Erdmann.
\newblock {\em The blocking technique, weighted mean operators and {H}ardy's
  inequality}, volume 1679 of {\em Lecture Notes in Mathematics}.
\newblock Springer-Verlag, Berlin, 1998.

\bibitem[GKPS17]{GKPS}
Amiran Gogatishvili, Martin K\v{r}epela, Lubo\v{s} Pick, and Filip Soudsk\'{y}.
\newblock Embeddings of {L}orentz-type spaces involving weighted integral
  means.
\newblock {\em J. Funct. Anal.}, 273(9):2939--2980, 2017.

\bibitem[GM17a]{GM2}
Amiran Gogatishvili and Rza~Ch. Mustafayev.
\newblock Iterated {H}ardy-type inequalities involving suprema.
\newblock {\em Math. Inequal. Appl.}, 20(4):901--927, 2017.

\bibitem[GM17b]{GM1}
Amiran Gogatishvili and Rza~Ch. Mustafayev.
\newblock Weighted iterated {H}ardy-type inequalities.
\newblock {\em Math. Inequal. Appl.}, 20(3):683--728, 2017.

\bibitem[GOP06]{opic2006weighted}
Amiran Gogatishvili, Bohum{\'\i}r Opic, and Lubo{\v s} Pick.
\newblock Weighted inequalities for {H}ardy-type operators involving suprema.
\newblock {\em Collect. Math.}, 57(3):227--255, 2006.

\bibitem[GP03]{GP-discr}
Amiran Gogatishvili and Lubo\v{s} Pick.
\newblock Discretization and anti-discretization of rearrangement-invariant
  norms.
\newblock {\em Publ. Mat.}, 47(2):311--358, 2003.

\bibitem[GP07]{gogatishvili2007reduction}
Amiran Gogatishvili and Lubo{\v s} Pick.
\newblock A reduction theorem for supremum operators.
\newblock {\em J. Comput. Appl. Math.}, 208(1):270--279, 2007.

\bibitem[K{\v{r}}e17a]{Kre3}
Martin K{\v{r}}epela.
\newblock Integral conditions for {H}ardy-type operators involving suprema.
\newblock {\em Collect. Math.}, 68(1):21--50, 2017.

\bibitem[K{\v{r}}e17b]{Kre1}
Martin K{\v{r}}epela.
\newblock Iterating bilinear {H}ardy inequalities.
\newblock {\em Proc. Edinb. Math. Soc. (2)}, 60(4):955--971, 2017.

\bibitem[Mus16]{vej17}
V\'{i}t Musil.
\newblock Optimal {O}rlicz domains in {S}obolev embeddings into {M}arcinkiewicz
  spaces.
\newblock {\em J. Funct. Anal.}, 270(7):2653--2690, 2016.

\bibitem[Mus19]{vej19}
V\'{i}t Musil.
\newblock Fractional maximal operator in {O}rlicz spaces.
\newblock {\em J. Math. Anal. Appl.}, 474(1):94--115, 2019.

\bibitem[Sin94]{Sinn94}
Gord Sinnamon.
\newblock Spaces defined by the level function and their duals.
\newblock {\em Studia Math.}, 111(1):19--52, 1994.

\bibitem[Sin03]{Sinn03}
Gord Sinnamon.
\newblock Transferring monotonicity in weighted norm inequalities.
\newblock {\em Collect. Math.}, 54(2):181--216, 2003.

\bibitem[Sla15]{len15}
Lenka Slav\'{i}kov\'{a}.
\newblock Compactness of higher-order {S}obolev embeddings.
\newblock {\em Publ. Mat.}, 59(2):373--448, 2015.

\bibitem[SS96]{Si-St96}
Gord Sinnamon and Vladimir~D. Stepanov.
\newblock The weighted {H}ardy inequality: new proofs and the case {$p=1$}.
\newblock {\em J. London Math. Soc. (2)}, 54(1):89--101, 1996.

\end{thebibliography}
\end{document}